\renewcommand{\@seccntformat}[1]{\bf\@nameuse{the#1}.\quad}
\renewcommand\section{\@startsection{section}{1}%
                  \z@{.7\linespacing\@plus\linespacing}{.5\linespacing}%
                  {\normalfont\bfseries \boldmath}}
\renewcommand\subsection{\@startsection{subsection}{2}%
                  \z@{.5\linespacing\@plus.7\linespacing}{-.5em}%
                  {\normalfont\bfseries \boldmath}}
\renewcommand\subsubsection{\@startsection{subsubsection}{3}%
                  \z@{.3\linespacing\@plus.5\linespacing}{-.5em}%
                  {\normalfont\bfseries \boldmath}}
\newtheorem{thm}{Theorem}[subsection] 
 \newtheorem{lem}[thm]{Lemma}
 \newtheorem{cor}[thm]{Corollary}
 \newtheorem{prop}[thm]{Proposition}
\theoremstyle{definition}
 \newtheorem{example}[thm]{Example}
\numberwithin{equation}{subsection}
\newcounter{txtctr}[section] \setcounter{txtctr}{0}
\newcommand{\rk}{\operatorname{rk}}
\newcommand{\Lie}{\operatorname{Lie}}
\newcommand{\ind}{\operatorname{ind}}
\newcommand{\Ext}{\operatorname{Ext}}
\newcommand{\into}{\hookrightarrow}
\renewcommand{\mod}{\operatorname{mod}}
\newcommand{\cal}[1]{\mathcal{#1}}
\newcommand{\Ll}{\cal{L}(\lambda)} 
\newcommand{\hwl}{H^0(w,\lambda)}
\newcommand{\XyP}{X(y)_P}
\newcommand{\V}{\mathcal{V}} 
\newcommand{\NNu}{\mathcal{N}_1(\mathfrak{u})} 
\begin{document}

\title[On the support varieties of Demazure modules]
{\bf On the support varieties of Demazure modules}

\begin{abstract} In \cite{NPV, UGA}, the support varieties for the induced modules/Weyl modules 
for a reductive algebraic group $G$ were computed over the first Frobenius kernel $G_{1}$. A natural 
generalization of this computation is the calculation of the support varieties of Demazure modules 
over the first Frobenius kernel, $B_{1}$,  of the Borel subgroup $B$. In the paper we initiate the study of 
such computations. We complete the entire picture for reductive groups with underlying root 
systems $A_{1}$ and $A_{2}$. Moreover, we give complete answers for Demazure modules 
corresponding to a particular (standard) element in the Weyl group, and provide results relating support varieties 
between different Demazure modules which depends on the Bruhat order.  
\end{abstract}

\author{\sc Benjamin F Jones}
\address
{Department of Mathematics, Statistics, and Computer Science\\ University of Wisconsin-Stout \\
Menomonie\\ WI~54751, USA}
\thanks{Research of the first author was supported in part by NSF
VIGRE grant DMS-0738586.}
\email{jonesbe@uwstout.edu}

\author{\sc Daniel K. Nakano}
\address
{Department of Mathematics\\ University of Georgia \\
Athens\\ GA~30602, USA}
\thanks{Research of the second author was supported in part by NSF
grant DMS-1002135.} \email{nakano@math.uga.edu}

\subjclass[2000]{Primary 17B56, 17B10; Secondary 20G10}

\maketitle

\section{Introduction} 
\subsection{} Let $G$ be a connected, simply connected, simple algebraic group
scheme defined over ${\mathbb F}_{p}$. Moreover, let $W$ be the associated
Weyl group, $B$ a Borel subgroup and $X(T)_{+}$ be the set of dominant
weights. Given $w\in W$ and $\lambda\in X(T)_{+}$, a natural set of
$B$-modules that arise are the Demazure modules labelled by $H^{0}(w,\lambda)$
which can be constructed using iterated inductions involving parabolics
corresponding to simple reflections occuring in a reduced decomposition of
$w$. When $w=w_{0}$ is the long element of $W$ one recovers the induced $G$-modules
$H^{0}(\lambda)=\text{ind}_{B}^{G} \lambda$ which can be realized as global
sections of the line bundle ${\mathcal L}(\lambda)$ over $G/B$.

Demazure modules arise naturally as the global sections on a line bundle
${\mathcal L}(\lambda)$ on the Schubert scheme $X(w)$ \cite[Ch. 14]{Jan}.
The structure of Demazure modules, and $B$-modules with excellent filtration
in general, is closely related to the geometry of the underlying Schubert
varieties (resolution of singularities, sheaf cohomology, normality, and
rational singularities). For example, Mehta and Ramanathan, using the
technique of Frobenius splittings, and later Andersen, using
representation-theoretic techniques, showed that the analog of Kempf's
vanishing theorem holds for sections of a dominant line bundle restricted to
a Schubert variety. This result was applied to complete Demazure's proof of
his character formula. As another example, Polo \cite{Polo} and van der
Kallen \cite{vanderKallen} use the normality of Schubert varieties in a
crucial way in their investigation of the category of $B$-modules with
excellent filtration.

\subsection{} In 2002, at a workshop in Seoul Korea, B. Parshall proposed the
problem of computing the support varieties of the Demazure modules
$H^{0}(w,\lambda)$ over the first Frobenius kernel $B_{1}$. This problem is 
a natural and interesting extension of the ``Jantzen Conjecture" on support varieties 
which predicted the support varieties of $H^{0}(\lambda)$ over $G_{1}$ when the 
characteristic of the field is good. The conjecture was verified by Nakano,
Parshall and Vella \cite{NPV} and the support varieties of $H^{0}(\lambda)$
over $G_{1}$ were shown to be closures of Richardson orbits. This computation 
was later extended to fields of bad characteristic by the University of Georgia VIGRE Algebra Group
\cite{UGA}. In the later case, the support variety of $H^{0}(\lambda)$ is
still irreducible and is the closure of an orbit, but the orbits need not be
Richardson.

Support varieties are natural with respect to the inclusion of $B_{1}$ in
$G_{1}$, so one can deduce from the aforementioned results that the $B_{1}$
support varieties of $H^{0}(\lambda)$ will be unions of closures of \emph{orbital varieties} (see \cite{Mel}). Indeed, orbital varieties should play an important role in the
general theory of support varieties of Demazure modules. This will be more
evident in the results in this paper. 

The main obstacle in computing support varieties for general Demazure modules is that these modules 
are rarely $G$-modules (i.e., their support varieties are not $G$-invariant, and not 
closures of finitely many $G$-orbits). In general there are infinitely many $B$-orbits on 
the nilpotent radical of $\Lie(B)$. At present it is not known how to classify these $B$-orbits. 
The aim of the paper is to study the behavior of support varieties of Demazure modules. 
In many instances we will be able to provide an explicit description of the supports. 

The paper is organized as follows. In Section 2, we present various properties of 
Schubert varieties that will be used throughout the paper. We then discuss properties 
of support varieties over the Frobenius kernels $B_{r}$ and $P_{r}$. Several of the 
main results in \cite{FP} and \cite{NPV} need to be modified and generalized for the purposes of this paper 
(cf. Theorem 3.2.1 and Theorem 3.3.1). In Section 4, we prove a $G$-saturation result for the 
$B_{r}$ support varieties of Demazure modules. In particular, we show that if $w_{1}<w_{2}$ (in 
the Bruhat order) then $G\cdot {\mathcal V}_{B_{r}}(H^{0}(w_{2},\lambda))\subseteq 
G\cdot {\mathcal V}_{B_{r}}(H^{0}(w_{1},\lambda))$. This result is subtle and we indicate by example that this inclusion 
does not hold if one ignores the process of $G$-saturation (cf. Example 4.1.2). With these results, we describe the 
supports of the Demazure modules in the $A_{1}$ case. Calculations of support varieties 
${\mathcal V}_{B_{1}}(H^{0}(w,\lambda))$ are given for specific $w\in W$ in Section 5. Finally, 
in Section 6, we provide a complete description of ${\mathcal V}_{B_{1}}(H^{0}(w,\lambda))$ for algebraic groups of type $A_{2}$. 
An interesting facet of the $A_{2}$-computation is the need to analyze and use information about higher sheaf cohomology groups.

\section{Schubert Schemes}

\subsection{Notation} 

Throughout this paper, let $k$ be an algebraically closed field of characteristic $p > 0$. For an algebraic group $H$, 
the notation $\operatorname{Mod}(H)$ denotes the category of rational $H$-modules and
$\operatorname{mod}(H)$ denotes the category of finite dimensional, rational
$H$-modules.

Let $\Phi$ be a finite irreducible root system for a Euclidean space ${\mathbb E} $.
The inner product on ${\mathbb E} $ will be denoted by $(\ , \ )$.
For $\alpha\in\Phi$,
let
$\alpha^{\vee}=2\alpha/(\alpha,\alpha)$ be the corresponding coroot.
Fix a set $\Delta=\{\alpha_1,\cdots, \alpha_\ell\}$
of simple roots, and let $\Phi^{+}$ be the
corresponding set of positive roots. 
The Weyl group $W\subset O(\mathbb E)$ is the group generated by the reflections
$s_\alpha:{\mathbb E}\to{\mathbb E}$, $\alpha\in\Phi$, given by
$s_\alpha(x)=x-2(x,\alpha^\vee)\alpha$. 

Unless otherwise stated,  $G$ will denote a reductive algebraic group over $k$. 
We will always assume that the derived group $G'$ is simply connected.
Also, assume that $G$ has root system $\Phi$ with respect to a maximal split torus $T$. 
Let $B\supset T$ be the Borel subgroup defined by $-\Phi^{+}$.  The positive
Borel subgroup containing $T$ will be denoted $B^+$. Moreover, 
let $X(T)=X(B)$
be the group of integral characters of $T$ or, equivalently, of $B$.
Given $\lambda\in X(T)$, we will  let $\lambda$
also denote the one-dimensional $B$-module defined by regarding
$\lambda$ as a character on $B$. Then the set of dominant
integral weights is defined by 
$$X_{+}:=X(T)_{+}=\{\lambda\in X(T)\,\,|\,\,\,
0\leq (\lambda,\alpha_i^{\vee}),\quad 1\leq i\leq\ell
\}.$$ 

Let $\rho$ be the half sum of the  positive roots. We partially order
$X(T)$ by setting $\lambda\geq \mu$ if and only if
$\lambda-\mu\in \sum_{\alpha\in \Delta}\mathbb{N}\alpha$. Let $h$ be the
Coxeter number of $G$. Thus, if $G'$ is simple, $h=(\rho,\alpha_0^\vee)+1$
where $\alpha_0$ is the maximal short root in $\Phi$; otherwise,
$h$ is the maximal of the Coxeter numbers for the simple factors
of $G'$.  

Each subset $J \subset \Delta$ gives rise to a standard parabolic
subgroup $P = P_J$ containing $B$ whose Lie algebra is generated by
$\mathfrak{t} = \Lie(T)$, the negative root spaces
$\mathfrak{g}_{-\alpha}$ ($\alpha \in \Phi_+$), and the positive root
spaces in the span of $J$: $\mathfrak{g}_{\alpha}$ for $\alpha \in
\Phi_{J}$. The subgroup $P_J$ has a Levi decomposition $P_J = L_J U_J$
where $\Lie(L_J)$ is generated by $\mathfrak{t}$ and the root spaces
$\mathfrak{g}_{\pm\alpha}$ for $\alpha \in J$ and $\Lie(U_J)$ is
generated by the root spaces $\mathfrak{g}_{-\alpha}$ for $\alpha \in
\Phi_+ \backslash \Phi_{J}$. We denote by $W_J$ the subgroup of $W$
generated by reflections $s_\alpha$ for $\alpha \in J$ and identify it
with the Weyl group of $L_J$. We denote the set of minimal length
right coset representatives for $W/W_J$ by $W^J$. When $P = P_J$ we also use
notations $W_P$ and $W^P$. We denote the opposite parabolic subgroup
that contains $B^+$ by $P_J^+$.

For $G$ as given above, the dominant weights $\lambda\in X(T)_+$ index the simple modules $L(\lambda)$
by their highest weight.  If $\ind_B^G:\mod(B) \to
\mod(G)$ is the induction functor, let $H^0(\lambda)=\ind_B^G \lambda$ for
 $\lambda\in X(T)$.  If $\lambda\notin X(T)_+$, then 
$H^{0}(\lambda)=0$, while if $\lambda\in X(T)_{+}$ then $H^0(\lambda)$
has socle $L(\lambda)$. 

Let $F:G\to G$ be the Frobenius morphism 
on $G$ induced by its ${\mathbb F}_p$-structure.  For
$r\geq 1$, put $G_r={\text{\rm ker}}(F^r)$.  If $H$ is an $F$-stable
subgroup of $G$, write similarly $H_r={\text{\rm
ker}}(F^r|_H)$---e.g., $B_r={\text{\rm ker}}(F^r|_B)$.  
The group scheme $H_r$ is a finite $k$-group, i.e., an affine algebraic group 
scheme over $k$ with finite dimensional coordinate algebra $k[H_r]$. Also, it has
height $\leq r$. In what follows, all affine $k$-groups $A$ will, by definition, be assumed to be algebraic, i.e.,
the coordinate algebra $k[A]$ is assumed to be finitely generated over $k$.
If $M\in \text{Mod}(H)$, let $M^{(r)}$ be the module in $\text{Mod}(H)$ obtained by composing the 
representation corresponding to $M$ with $F^{r}$. 

\subsection{Schubert Schemes}
\label{subsec:schubert}

In this section we follow the notation and conventions of \cite[II. Chapters 13-14]{Jan}.  Fix a parabolic subgroup $P$. The group $G$ has a Bruhat decomposition:
\[ G = \bigcup_{w \in W^P} B \dot{w} P \] where $\dot{w}$ denotes a
chosen representative of $w$ in $N_G(T)$. This induces a decomposition
$G/P = \cup B\dot{w}P/P$ into $B$-stable affine subschemes (cells). We
denote by $X(w)_P$ the closure of the cell $B\dot{w}P/P$ in
$G/P$. These are the Schubert varieties of $G/P$.  When $P=B$ is a
Borel subgroup, we simply use the notation $X(w)=X(w)_B$.

Let $M \in \mod(P)$. The variety $G \times_P M$ is naturally a vector bundle over $G/P$. We denote this vector bundle by $\cal{L}(M)$. The most important case is when $P=B$ and $M = k_\lambda$ for $\lambda \in X(T)$ in which case $\cal{L}(M)$ is a line bundle on $G/B$. If $J \subset  \Delta$ and $\lambda$ satisfies $(\lambda, \alpha^\vee) = 0$ for all $\alpha \in J$, then there is a line bundle 
$\cal{L}(\lambda)_P$  on $G/P$ where $P = P_J$. This bundle pulls back to $\cal{L}(\lambda)$ on $G/B$ under the quotient map $G/B \to G/P$ which is locally trivial. Therefore, by \cite[I 5.17]{Jan}, there is a canonical isomorphism $H^0(G/B, \Ll) \cong H^0(G/P, \Ll_P)$.

The cohomology groups $H^i(G/B, \cal{L}(M))$ are naturally $G$-modules. 
For each $y \in W^P$ the inclusion $X(y)_P \into G/P$ induces the restriction map $H^i(G/P, \cal{L}(M)) \to 
H^i(X(y)_P, \cal{L}(M))$.   

The schemes $X(y)_P$ admit resolutions of singularities $\phi: X \to X(y)_P$ which are equivariant with respect to $B$ and depend on a reduced decomposition of $\dot{y}$, a minimal length coset representative of $y$ in $W$ (cf. \cite[13.6]{Jan}). The resolution $X$ is defined as a subset of a variety $Z$ which is a fiber bundle over $G/B$:

\begin{equation} \label{fig:resolution-diagram}
\xymatrix{
X \ar[r] \ar[d]_{\dot{\phi}} & Z \ar[d] \\
X(\dot{y}) \ar@{^{(}->}[r] \ar[d]_{\pi_P} & G/B \ar[d]^{\pi_P} \\
X(y)_P \ar@{^{(}->}[r]_i & G/P \quad .}
\end{equation}
In the diagram, $\pi_P$ is the natural projection $G/B \to G/P$ which is birational when restricted to $X(\dot{y})$ and the resolution $\phi$ is $\phi = \pi_P \circ \dot{\phi}$.
We need the following well known geometric results on Schubert varieties and sheaf cohomology.

\begin{prop} \label{prop:jan-cohomology} \cite[II. 14.15]{Jan} Let
  $y \in W^P$, let $\dot{y}$ be a minimal length right coset representative
  of $y$ in $W$, and let $w \in W$. Then the following hold:
\begin{enumerate}
\item $X(y)_P$ is normal, closed subscheme of $G/P$.
\item For every vector bundle $V$ on $G/P$ and $i\geq 0$, $H^i( X(y)_P, V ) \cong 
H^i( X(\dot{y}), \pi_P^*V ) \cong H^i( X, \phi^*V ).$
\item For all $\lambda \in X(T)_+$, $H^i( X(w), \Ll ) = 0$ for $i>0$.
\item Given $\lambda \in X(T)_+$ such that $( \lambda, \alpha^\vee ) = 0$ for all $\alpha \in J$ where $J\subseteq \Delta$, 
the restriction map $H^i( G/P, \Ll_P ) \to H^i( \XyP, \Ll_P )$ is surjective and moreover 
$$H^i( \XyP, \Ll_P ) = 0$$ for all $i > 0$ where $P = P_J$ is the standard parabolic subgroup associated to $J$.
\end{enumerate}
\end{prop}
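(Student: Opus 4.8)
These are classical facts (see \cite[II.14.15]{Jan}); the plan is to deduce all four parts from the theory of Frobenius splittings of flag and Schubert varieties, in the spirit of Mehta--Ramanathan, Ramanan--Ramanathan, and Andersen. The starting point is that $G/B$, and hence every $G/P$, carries a Frobenius splitting compatible with all Schubert subvarieties $X(w)$ and with their pairwise unions; such a splitting is manufactured from a section of $\omega_{G/B}^{-1}$ vanishing to order $p-1$ along the union of the codimension-one Schubert varieties together with their opposites.

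For (i), the Bott--Samelson variety $X$ attached to a reduced word for $\dot y$ is smooth, is an iterated $\mathbb{P}^1$-bundle, and $\phi\colon X\to X(y)_P$ is proper and birational; climbing the tower of $\mathbb{P}^1$-bundles one proves by induction on $\ell(y)$ that $\phi_*\mathcal{O}_X=\mathcal{O}_{X(y)_P}$. Since $X(y)_P$ inherits a Frobenius splitting from $G/P$ and receives a proper morphism from the smooth variety $X$ realizing its structure sheaf as a pushforward, it follows that $X(y)_P$ is normal, and it is closed in $G/P$ by construction. For (ii), one upgrades this to the assertion that $\phi$ --- and likewise $\pi_P$ restricted to $X(\dot y)$, whose fibres are Schubert varieties for the Levi $L_J$ --- is a \emph{rational resolution}: $\phi_*\mathcal{O}_X=\mathcal{O}_{X(y)_P}$ and $R^i\phi_*\mathcal{O}_X=0$ for $i>0$. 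This is the genuinely positive-characteristic step, since Grauert--Riemenschneider vanishing is unavailable; it is again proved by induction along the defining $\mathbb{P}^1$-fibrations, using relative Kempf vanishing at each stage and the ambient Frobenius splitting to supply the needed positivity. Granting this, the projection formula gives $R\phi_*(\phi^*V)\cong V$ for every vector bundle $V$ on $G/P$, so the Leray spectral sequence degenerates to $H^i(X,\phi^*V)\cong H^i(X(y)_P,V)$, and factoring $\phi=\pi_P\circ\dot\phi$ and applying the $P=B$ case to $\dot\phi$ produces the intermediate isomorphism with $H^i(X(\dot y),\pi_P^*V)$.

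Part (iii) is the analog of Kempf's vanishing theorem for Schubert varieties. For $\lambda\in X(T)_+$ the bundle $\Ll$ is the pullback of an ample bundle under a projection $G/B\to G/P_\lambda$, hence its restriction to $X(w)$ is semiample; since $X(w)$ is Frobenius split, $H^i(X(w),\Ll)$ is a direct summand of $H^i(X(w),\Ll^{\otimes p^n})$, and using part (ii) to pass this last group to $H^i$ of an ample bundle on a Schubert variety in $G/P_\lambda$, Serre vanishing kills it for $n\gg 0$. (Equivalently, one transports the computation to the Bott--Samelson variety via part (ii) and evaluates it along the $\mathbb{P}^1$-tower, which is precisely the derivation of the Demazure character formula.) For (iv), when $(\lambda,\alpha^\vee)=0$ for all $\alpha\in J$ one has $\pi_P^*\Ll_P=\Ll$, so part (ii) with $V=\Ll_P$ together with part (iii) gives $H^i(X(y)_P,\Ll_P)\cong H^i(X(\dot y),\Ll)=0$ for $i>0$. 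The restriction map $H^i(G/P,\Ll_P)\to H^i(X(y)_P,\Ll_P)$ is then trivially surjective for $i>0$, while for $i=0$ surjectivity follows from the compatible Frobenius splitting: it exhibits $H^1(G/P,\mathcal{I}_{X(y)_P}\otimes\Ll_P)$ as a summand of the same group with $\Ll_P$ replaced by $\Ll_P^{\otimes p^n}$, which vanishes for $n\gg 0$ by the semiampleness input used for (iii).

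The main obstacle is part (ii): proving that Bott--Samelson resolutions are rational resolutions in characteristic $p$. Once that is secured, parts (i), (iii) and (iv) follow by the formal cohomological manipulations indicated above, so the whole proposition rests on the inductive analysis --- controlled by Frobenius splitting --- of the tower of $\mathbb{P}^1$-bundles defining $X$.
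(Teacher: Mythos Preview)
The paper does not give a proof of this proposition; it is stated with a citation to \cite[II.14.15]{Jan} and used as input. Your outline is essentially the argument recorded there (the Frobenius-splitting approach of Mehta--Ramanathan, Ramanan--Ramanathan and Andersen, with the Bott--Samelson resolution supplying the inductive $\mathbb{P}^1$-tower), so there is nothing substantive to compare.
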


We also need the identification of the $G$-module $H^i(G/P,\cal{L}(M)_P)$ with induction from $P$ to $G$.

\begin{prop} \cite[I.5.12]{Jan}
\label{prop:ind-vs-cohomology}
\begin{enumerate}
\item For any $P$-module $M$ and $i\geq 0$ there is a canonical isomorphism
\[ R^i \ind^G_P M \cong H^i( G/P, \cal{L}(M) ) .\]
\item Let $H \subset K$ be $k$-group schemes such that $K/H$ is Noetherian (e.g.,  $K$ is reductive and $H$ is a parabolic, or $H \subset K \subset G$ are both parabolic in a reductive group) and let $M$ be a rational $H$-module. Then,
\[ R^i \ind^K_H M = 0 \]
for $i > \dim K/H$.
\end{enumerate}
\end{prop}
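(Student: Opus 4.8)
The plan is to handle both parts through the standard dictionary between induction and sheaf cohomology on $G/P$ (this is \cite[I.5.12]{Jan}). For part (1), I would start from the identification $\ind_P^G M \cong H^0(G/P,\cal{L}(M))$ of global sections of the associated bundle $\cal{L}(M)=G\times_P M$, so that $\ind_P^G$ factors as the composite $\Gamma(G/P,-)\circ\cal{L}$ on $\mod(P)$. The functor $\cal{L}$ is exact --- over a Zariski trivialization $\pi^{-1}(U)\cong U\times P$ of the quotient map $\pi\colon G\to G/P$ it is simply $M\mapsto\mathcal{O}(U)\otimes_k M$ --- so by the Grothendieck spectral sequence for a composite it suffices to show that $\cal{L}$ carries injective $P$-modules to $\Gamma$-acyclic sheaves. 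Here I would use that every injective object of $\operatorname{Mod}(P)$ is a direct summand of a module of the form $k[P]\otimes_k E$ with $E$ a trivial module, together with the tensor identity $N\otimes_k k[P]\cong N^{\mathrm{triv}}\otimes_k k[P]$, to identify $\cal{L}(k[P]\otimes_k E)$ with $(\pi_*\mathcal{O}_G)\otimes_k E$. Since the fibre $P$ is affine and $\pi$ is locally trivial, $\pi$ is an affine morphism, so $R^q\pi_*\mathcal{O}_G=0$ for $q>0$; the Leray spectral sequence then gives $H^i(G/P,\pi_*\mathcal{O}_G)\cong H^i(G,\mathcal{O}_G)$, which vanishes for $i>0$ because $G$ is affine. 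Thus $\cal{L}$ of an injective is $\Gamma$-acyclic, the composite-functor spectral sequence degenerates, and we obtain $R^i\ind_P^G M\cong H^i(G/P,\cal{L}(M))$.

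For part (2), I would observe that the whole argument for (1) runs with $(G,P)$ replaced by $(K,H)$ in each of the listed situations: $K$ is affine, $K\to K/H$ is locally trivial with affine fibre $H$ (hence an affine morphism), and $K/H$ is a Noetherian scheme of finite Krull dimension $\dim K/H$. This yields $R^i\ind_H^K M\cong H^i(K/H,\cal{L}(M))$, after which it only remains to quote Grothendieck's vanishing theorem: on a Noetherian topological space of Krull dimension $n$ every abelian sheaf has vanishing cohomology in degrees $>n$. Taking $n=\dim K/H$ gives $R^i\ind_H^K M=0$ for $i>\dim K/H$. For the partial flag variety cases one could instead cover $K/H$ by $1+\dim K/H$ affine open translates of the big cell and read the vanishing off the {\v C}ech complex.

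The part I expect to require the most care is the acyclicity input in (1) --- namely, pinning down that the injectives of $\operatorname{Mod}(P)$ are summands of $k[P]\otimes(\text{trivial})$, that $\cal{L}$ turns such a module into $\pi_*\mathcal{O}_G\otimes(\text{trivial})$ via the tensor identity, and that $\pi\colon G\to G/P$ is affine so that affineness of $G$ propagates to $\Gamma$-acyclicity on $G/P$. Once that is in place, part (1) is a formal consequence of the composite-functor spectral sequence and part (2) is immediate from Grothendieck vanishing.
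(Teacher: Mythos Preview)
The paper does not supply a proof of this proposition at all; it is stated with a bare citation to \cite[I.5.12]{Jan} and used as a black box. Your sketch is correct and is essentially the argument given in Jantzen: identify $\ind_P^G$ with $\Gamma(G/P,-)\circ\cal{L}$, check that $\cal{L}$ is exact and sends injectives to acyclics (via $\cal{L}(k[P]\otimes E)\cong\pi_*\mathcal{O}_G\otimes E$ and affineness of $\pi$), and then invoke Grothendieck vanishing for part (ii). There is nothing to compare against in the paper itself.
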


\section{Support Varieties over $P_{r}$} 

\subsection{} In this section let $A$ be an arbitrary 
finite $k$-group scheme and $\text{mod}(A)$ be the category of finite-dimensional 
$A$-modules. We will consider maximal ideals in the commutative 
part of the cohomology ring so set 
$$R:=\operatorname{H}(A,k)=\begin{cases} \operatorname{H}^{2\bullet}(A,k) & \text{if $\text{char }k\neq 2$} \\ 
                       \operatorname{H}^{\bullet}(A,k)  & \text{if $\text{char }k=2$}.  \\ 
\end{cases} 
$$ 
Friedlander and Suslin \cite{FS} proved that $R$ is a finitely generated $k$-algebra \cite{FS}.
Let $\V_{A}$ denote the variety associated to the maximum ideal spectrum of $R$. 
Given $M,M'\in \text{mod}(A)$ we define the {\em relative support variety} $\V_A(M,M')
=\text{Maxspec}(R/J_{M,M^{\prime}})$ where $J_{M,M^{\prime}}$ is the annhilator 
of the action of $R$ on $\text{Ext}^{\bullet}_{A}(M,M^{\prime})$. The action (Yoneda product) of 
$R=\text{Ext}_{A}^{\bullet}(k,k)$ on $\text{Ext}^{\bullet}_{A}(M,M^{\prime})$ is given by taking an 
extension in $R$ applying $- \otimes_{k} M^{\prime}$ then concatenating the new class with 
an extension class in $\text{Ext}^{\bullet}_{A}(M,M^{\prime})$ (cf. \cite[Section 2.6]{Ben}).

The ordinary {\em support variety} of $M\in \text{mod}(A)$ is $\V_{A}(M):=\V_{A}(M,M)$. 
In general for any $M,M'\in \text{mod}(A)$, $\V_A(M,M')$ is a homogeneous closed subvariety 
contained in $\V_{A}=\V_{A}(k)$. For the basic properties of support varieties 
for finite $k$-group schemes we refer the reader to \cite[Section 5]{FPe} and \cite[\S2.2]{NPV}. 

Let $H$ be a closed subgroup of a finite $k$-group $A$ of height $\leq r$.  
Suslin, Friedlander and Bendel \cite[(5.4)]{SFB2} proved that  
the image of the restriction map $\text{res}:\text{H}(A,k)_{\text{red}}\to \text{H}(H,k)_{\text{red}}$
contains all $p^r$th powers $x^{p^r}$ of elements $x\in H(H,k)_{\text{red}}$, and the
induced morphism $\text{res}^{*}:\V_H\to \V_A$ maps $\V_H$ homeomorphically onto its image as 
a closed subvariety of $\V_A$. In this paper we will identify the image of 
$\V_{H}$ with $\text{res}^{*}(\V_{H})$ in $\V_{A}$. Under this map we have the 
following naturality property. 

\begin{prop}
\label{prop:naturality} 
Let $H$ be a closed subgroup of $A$. Then $\V_{H}(M) =\V_{H}\cap \V_{A}(M)$.
\end{prop}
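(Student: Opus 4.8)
The plan is to prove both inclusions $\V_H(M) \subseteq \V_H \cap \V_A(M)$ and $\V_H \cap \V_A(M) \subseteq \V_H(M)$ using the module structure of $\Ext$-groups over the cohomology ring $R = \opH(A,k)$ and the restriction map $\text{res}: \opH(A,k) \to \opH(H,k)$, which we have already identified with a closed embedding $\text{res}^*: \V_H \into \V_A$. First I would recall that the annihilator ideal $J_{M,M}$ defining $\V_A(M)$ inside $R$ satisfies $\V_A(M) = \text{Maxspec}(R/J_{M,M})$, and that the support variety $\V_H(M)$ sits inside $\V_H$ via the $\opH(H,k)$-module structure on $\Ext^\bullet_H(M,M)$. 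The key observation is that $\Ext^\bullet_H(M,M)$ is a module over $\opH(A,k)$ via restriction, and the resulting subvariety of $\V_A$ cut out by its $R$-annihilator is exactly $\text{res}^*(\V_H(M))$; this is a standard compatibility of support varieties under the embedding $\V_H \into \V_A$ (cf. the naturality statements in \cite[\S2.2]{NPV}).

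For the inclusion $\V_H(M) \subseteq \V_H \cap \V_A(M)$: since $\V_H(M) \subseteq \V_H$ is automatic from the definition, it suffices to show $\text{res}^*(\V_H(M)) \subseteq \V_A(M)$. This follows because the restriction map $\Ext^\bullet_A(M,M) \to \Ext^\bullet_H(M,M)$ is a homomorphism of $R$-modules, so any element of $R$ annihilating $\Ext^\bullet_A(M,M)$ (i.e., lying in $J_{M,M}$) maps into something annihilating the image of $\Ext^\bullet_A(M,M)$ in $\Ext^\bullet_H(M,M)$; combined with the fact that the theory of relative support varieties is insensitive to replacing $\Ext$ by a nonzero power and that the $p^r$th powers of $\opH(H,k)_{\text{red}}$ lie in the image of restriction, one gets $J_{M,M} \cdot (R\text{-structure on }\Ext^\bullet_H(M,M))$ vanishes up to radical. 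The cleanest route is to use the projection formula / Frobenius-twist compatibility in \cite[(5.4)]{SFB2} directly, together with the fact that $\V_A(M)$ as a closed subset equals the zero locus of $J_{M,M}$.

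For the reverse inclusion $\V_H \cap \V_A(M) \subseteq \V_H(M)$: I would argue at the level of points. Take a closed point $\mathfrak{m} \in \V_H$ that also lies in $\V_A(M)$. Via the homeomorphism $\text{res}^*: \V_H \to \text{res}^*(\V_H) \subseteq \V_A$, write $\mathfrak{m} = \text{res}^*(\mathfrak{n})$ for a unique $\mathfrak{n} \in \V_H$. I want to show $\mathfrak{n} \in \V_H(M)$, equivalently that $\Ext^\bullet_H(M,M)$ localized at $\mathfrak{n}$ is nonzero. The hypothesis $\mathfrak{m} \in \V_A(M)$ says $\Ext^\bullet_A(M,M)$ does not vanish after localizing at $\mathfrak{m}$. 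One then needs to transfer nonvanishing of $\Ext^\bullet_A(M,M)$ at $\mathfrak{m}$ to nonvanishing of $\Ext^\bullet_H(M,M)$ at $\mathfrak{n}$; the standard device is a rank-variety-type interpretation or, more representation-theoretically, the fact that support varieties detect projectivity on cyclic shifted subgroups and these are visible inside $H$ because $H$ is a closed subgroup of $A$. Concretely, $\mathfrak{m} \in \V_A(M)$ means $M$ is not projective when restricted to the (infinitesimal) one-parameter subgroup or shifted cyclic subgroup corresponding to $\mathfrak{m}$; since $\mathfrak{m} \in \V_H$, that subgroup is conjugate into $H$, so $M|_H$ fails to be projective on it, giving $\mathfrak{n} \in \V_H(M)$.

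The main obstacle is the reverse inclusion: proving that $\Ext$-nonvanishing over $A$ at a point of $\text{res}^*(\V_H)$ forces $\Ext$-nonvanishing over $H$ at the corresponding point. Over a general finite $k$-group scheme one cannot literally talk about ``the shifted cyclic subgroup at $\mathfrak{m}$,'' so the argument must be made cohomologically: one should use that $\text{res}^*$ identifies $\V_H$ with a closed subvariety on which the restriction map $\opH(A,k)_{\text{red}} \to \opH(H,k)_{\text{red}}$ becomes, up to a finite purely inseparable map (the $p^r$-power map of \cite[(5.4)]{SFB2}), surjective — hence faithfully flat after inverting $p$, or at least such that nonvanishing of a module supported at $\mathfrak{m}$ pulls back. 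The delicate point is that $\Ext^\bullet_A(M,M)$ and $\Ext^\bullet_H(M,M)$ are different modules; one should instead compare both to $\opH(A, M^* \otimes M)$-style modules and use that restriction of cohomology to a closed subgroup is surjective onto the image identified by \cite[(5.4)]{SFB2}. I expect the write-up to reduce, after these identifications, to a routine statement that for a finitely generated graded module over a graded ring, the support of the module pulls back correctly along the finite inseparable map $\text{res}^*$, and this is where any remaining care is needed.
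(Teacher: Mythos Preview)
The paper does not actually prove this proposition: it is stated as a known ``naturality property'' immediately after the discussion of the Suslin--Friedlander--Bendel embedding $\text{res}^*:\V_H\hookrightarrow\V_A$, with the basic properties of support varieties referred to \cite[\S5]{FPe} and \cite[\S2.2]{NPV}. So there is no argument in the paper to compare against; the result is being quoted from the literature.

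Regarding your proposal itself: the overall shape is right, but your argument for the inclusion $\V_H(M)\subseteq\V_A(M)$ has a real gap. You observe that the restriction map $\Ext^\bullet_A(M,M)\to\Ext^\bullet_H(M,M)$ is a map of $R$-modules, and conclude that an element $\zeta\in J_{M,M}$ annihilating $\Ext^\bullet_A(M,M)$ ``maps into something annihilating the image.'' That is true, but it only controls the \emph{image} of restriction, not all of $\Ext^\bullet_H(M,M)$; what you need is that $\text{res}(\zeta)$ acts nilpotently on the whole of $\Ext^\bullet_H(M,M)$, and the $R$-linearity of restriction does not give you that. Your attempt to patch this with the $p^r$th-power statement from \cite[(5.4)]{SFB2} does not close the gap either, since that result concerns the map on cohomology rings, not the module $\Ext^\bullet_H(M,M)$.

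In fact the rank-variety\,/\,$\pi$-point idea you invoke for the \emph{reverse} inclusion is the clean way to handle \emph{both} directions simultaneously: a $\pi$-point (or infinitesimal one-parameter subgroup, in the setting of \cite{SFB2}) factoring through $H$ detects non-projectivity of $M|_H$ and of $M$ identically, so membership in $\V_H(M)$ and in $\V_A(M)\cap\V_H$ coincide pointwise. This is how the result is established in \cite{FPe} and is the justification the paper is implicitly citing. Your cohomological route for the forward inclusion can be made to work, but it requires an additional ingredient (e.g., Carlson's $L_\zeta$-modules and their behavior under restriction) that you have not supplied.
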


For infinitesimal group schemes of height 1, one can make the descriptions 
of support varieties quite explicit. Let $H$ be an affine algebraic group scheme 
defined over ${\mathbb F}_p$, $H_{1}=\text{ker }H_{1}$, and $\mathfrak{h}=\text{Lie }H$ 
(which is a restricted Lie algebra with $[p]$ operator). Let ${\cal N}_{1}({\mathfrak{h}})$  
be the closed subvariety of nilpotent elements in $\mathfrak h$ of $H$ defined by 
$${\cal N}_1(\mathfrak{h}):=\{x\in{\mathfrak h}\,|\,x^{[p]}=0\}.$$ 
We have following identification of varieties: 

\begin{prop}
\label{prop:VH1} \cite[(1.6), (5.11)]{SFB1}
$\V_{H_1}$ is homeomorphic to ${\cal N}_1(\mathfrak{h})$.
\end{prop}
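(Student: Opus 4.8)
The plan is to reduce the statement to a computation in restricted-Lie-algebra cohomology and then to produce an explicit continuous bijection from $\mathcal{N}_1(\mathfrak{h})$ onto $\V_{H_1}$ assembled out of ``cyclic shifted subgroups.''

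First I would use that $H_1$, being infinitesimal of height $\le 1$, has group algebra $kH_1$ canonically isomorphic to the restricted enveloping algebra $u(\mathfrak{h})$ of the restricted Lie algebra $\mathfrak{h}=\Lie H$; consequently $\mod(H_1)\simeq \mod(u(\mathfrak{h}))$ as categories and $H^{\bullet}(H_1,k)\cong \operatorname{Ext}^{\bullet}_{u(\mathfrak{h})}(k,k)$. By \cite{FS}, after passing to the even part (or keeping all of it when $p=2$) this is a finitely generated commutative $k$-algebra, so $\V_{H_1}=\operatorname{Maxspec}(R)$ is an affine variety, and the claim is that it is homeomorphic to the closed conical subvariety $\mathcal{N}_1(\mathfrak{h})=\{x\in\mathfrak{h}\,:\,x^{[p]}=0\}$ of the affine space $\mathfrak{h}$.

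Next I would construct the comparison morphism. For $0\ne x\in\mathcal{N}_1(\mathfrak{h})$, the one-dimensional restricted subalgebra $kx$ has $u(kx)\cong k[t]/(t^{p})$, and there is an associated height-one subgroup scheme $E_x\subseteq H_1$ with $\Lie E_x=kx$. The cohomology ring $H^{\bullet}(k[t]/(t^{p}),k)$ is known explicitly — it is $k[\zeta]\otimes\Lambda(\eta)$ with $\deg\zeta=2$ for $p$ odd, and $k[\eta]$ with $\deg\eta=1$ for $p=2$ — so $\V_{E_x}$ is just the affine line $\mathbb{A}^1$, identified with $kx$ itself. The restriction map on cohomology induces, as recalled before Proposition~\ref{prop:naturality}, a closed embedding $\V_{E_x}\into\V_{H_1}$; defining $\Psi(x)$ to be the image of the point $x\in\V_{E_x}$ under this embedding gives a map $\mathcal{N}_1(\mathfrak{h})\to\V_{H_1}$ which one checks is a morphism of varieties (the restriction maps vary algebraically with $x$) and is conical, hence continuous.

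Finally I would prove that $\Psi$ is a homeomorphism, which is the real content. Injectivity amounts to recovering the line $kx$ from the point $\Psi(x)$: if $\Psi(x)=\Psi(y)$, then the $u(kx)$- and $u(ky)$-support varieties of every $u(\mathfrak{h})$-module agree, and testing on a suitable module (a Carlson $L_{\zeta}$-module, or simply tracking where one non-nilpotent cohomology class restricts) forces $kx=ky$. Surjectivity is the substantive step: every maximal ideal of $R$ must be the kernel of some restriction $H^{\bullet}(u(\mathfrak{h}),k)\to H^{\bullet}(u(kx),k)$. I would obtain this from a Quillen-type stratification — the spectrum of $H^{\bullet}(u(\mathfrak{h}),k)$ is covered by the images of its restrictions to cyclic subalgebras — which for restricted Lie algebras follows either from Friedlander--Parshall's identification of the support variety of the trivial module with its rank variety, or from the Suslin--Friedlander--Bendel technique of writing down a ``generic'' $p$-nilpotent element over the coordinate ring of $\mathcal{N}_1(\mathfrak{h})$ and comparing cohomology after a faithfully flat base change; this also yields the equality of dimensions, $\dim\V_{H_1}=\dim\mathcal{N}_1(\mathfrak{h})$ (both being the complexity of $k$ over $u(\mathfrak{h})$). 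Once $\Psi$ is a continuous bijection of affine varieties, a closedness check — that $\Psi$ is a closed map, which is part of \cite[(1.6),(5.11)]{SFB1} — promotes it to a homeomorphism. I expect the surjectivity/stratification step, rather than the formal reduction to $u(\mathfrak{h})$ or the construction of $\Psi$, to be where essentially all the difficulty lies.
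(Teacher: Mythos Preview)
The paper does not prove this proposition at all: it is stated as a citation to \cite[(1.6), (5.11)]{SFB1} with no accompanying argument, and is used as a black box thereafter. There is therefore nothing in the paper to compare your sketch against.

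That said, your outline is a faithful summary of how the result is actually established in the cited literature. The reduction $kH_1\cong u(\mathfrak{h})$ and the construction of the map $\Psi$ via one-parameter (height-one) subgroups $E_x$ is exactly the mechanism of \cite{SFB1}; your identification of surjectivity as the substantive step is correct, and both routes you mention (the rank-variety theorem of Friedlander--Parshall \cite{FP} for restricted Lie algebras, or the universal $p$-nilpotent/faithfully-flat argument of \cite{SFB1}) are the standard ones. One small caution: the injectivity argument you sketch (``testing on a suitable module'') is a bit vague as written; in practice one uses that the restriction map $R\to H^{\bullet}(u(kx),k)_{\text{red}}\cong k[\zeta]$ is, up to nilpotents, evaluation at $x$ of a polynomial map $\mathfrak{h}\to R$ of degree one in the generators, so distinct lines give distinct kernels directly. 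With that adjustment your plan is sound, but for the purposes of this paper no proof is expected---the proposition is simply quoted.
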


Finally, we can use the identification in (3.1.2) to identify $\V_{H_{1}}(M)$ as a closed subvariety of ${\cal N}_1(\mathfrak{h})$. 

\begin{prop}
\label{prop:VH1M} \cite[(1.3) Theorem]{FP}
$\V_{H_1}(M)$ is homeomorphic to $$\{x\in {\mathcal N}_1({\mathfrak h}):\ M\ \text{is not } 
x\text{-projective} \} \cup\{0\}.$$
\end{prop}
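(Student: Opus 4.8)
The plan is to reduce the computation to one–dimensional ``shifted cyclic subgroups'' and then invoke the naturality of support varieties. Fix $0\neq x\in{\mathcal N}_1({\mathfrak h})$. Since $x$ is a primitive element of the distribution algebra $\operatorname{Dist}(H_1)=u({\mathfrak h})$ with $x^p=x^{[p]}=0$, the subalgebra $k\langle x\rangle$ it generates is a Hopf subalgebra isomorphic to $k[u]/(u^p)$; equivalently, $x$ spans the Lie algebra of a closed subgroup scheme $H_x\cong{\mathbb G}_{a(1)}$ of $H_1$ with $\operatorname{Dist}(H_x)=k\langle x\rangle$. I would first record the key geometric input: under the homeomorphism $\V_{H_1}\cong{\mathcal N}_1({\mathfrak h})$ of Proposition~\ref{prop:VH1}, together with the analogous homeomorphism $\V_{H_x}\cong{\mathcal N}_1(\Lie H_x)=\Lie H_x=kx$, the closed embedding $\operatorname{res}^{*}\colon\V_{H_x}\into\V_{H_1}$ coming from \cite{SFB2} is, as a map of underlying spaces, simply the inclusion of the line $kx$ into ${\mathcal N}_1({\mathfrak h})$. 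This is where one must look closely at the explicit construction of the isomorphisms of \cite{SFB1,SFB2} in terms of $p$-nilpotent one–parameter subgroups, and the one delicate point is to make this identification uniformly in all scalar multiples of $x$ (so that $H_{tx}$ matches scaling by $t$ on the line, up to a possible Frobenius twist that does not affect the underlying sets); this bookkeeping is the main obstacle, and everything after it is formal.

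Granting this, I would apply Proposition~\ref{prop:naturality} to the closed subgroup $H_x\subseteq H_1$, obtaining $\V_{H_x}(M)=\V_{H_x}\cap\V_{H_1}(M)$, which under the identifications above reads $\V_{H_x}(M)=kx\cap\V_{H_1}(M)$. On the other hand $\operatorname{Dist}(H_x)=k[u]/(u^p)$ is a local algebra with $\V_{H_x}\cong{\mathbb A}^1$, so the homogeneous closed subvariety $\V_{H_x}(M)$ of ${\mathbb A}^1$ is either $\{0\}$ or all of ${\mathbb A}^1$: it is $\{0\}$ precisely when $\Ext^{\bullet}_{H_x}(M,M)$ is finite–dimensional, i.e.\ when the restriction of $M$ to $k\langle x\rangle$ is free, i.e.\ when $M$ is $x$-projective; otherwise $\V_{H_x}(M)={\mathbb A}^1=kx$.

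Comparing the two descriptions of $\V_{H_x}(M)$ gives, for every $x\neq 0$, that $x\in\V_{H_1}(M)$ exactly when $kx\cap\V_{H_1}(M)\neq\{0\}$, that is, exactly when $M$ is not $x$-projective. Finally $0\in\V_{H_1}(M)$ always, since the support variety is a nonempty conical closed subvariety of $\V_{H_1}$, whereas $M$ is trivially $0$-projective; adjoining the origin by hand therefore accounts for $x=0$, and one concludes that $\V_{H_1}(M)$ is homeomorphic to $\{x\in{\mathcal N}_1({\mathfrak h}):M\text{ is not }x\text{-projective}\}\cup\{0\}$, as claimed. Thus the only step with genuine content is the compatibility asserted in the first paragraph; Propositions~\ref{prop:naturality} and \ref{prop:VH1}, together with the triviality of rank–one support varieties, do the rest.
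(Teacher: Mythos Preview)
The paper does not give its own proof of this proposition; it is simply quoted as \cite[(1.3) Theorem]{FP} and used as a black box. So there is no ``paper's proof'' to compare against beyond the citation itself.

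Your argument is a correct and standard modern reconstruction of the rank variety description, built out of the SFB machinery already recorded in the paper (Propositions~\ref{prop:naturality} and \ref{prop:VH1}) together with the trivial structure of support varieties over $k[u]/(u^p)$. You are right that the only substantive step is the compatibility of identifications---that under $\V_{H_1}\cong{\mathcal N}_1({\mathfrak h})$ the image of $\operatorname{res}^{*}$ for the one-parameter subgroup $H_x$ is precisely the line $kx$---and this is exactly what is established in \cite{SFB1,SFB2}. The original Friedlander--Parshall proof predates this theory and proceeds instead by a direct analysis of the May spectral sequence and the map $S^\bullet({\mathfrak h}^*)^{(1)}\to\opH^{2\bullet}(H_1,k)$, so your route is genuinely different in flavor: theirs is a hands-on cohomological computation specific to height~$1$, while yours deduces the height-$1$ statement from the general naturality principle for infinitesimal group schemes. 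Either way the result is the same, and your version has the advantage of fitting seamlessly with the other SFB citations in the paper.
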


\subsection{} For the purposes of this paper we need to analyze the relationship of 
support varieties over $B_{r}$ versus $P_{r}$ where $P$ is a parabolic subgroup of $G$. 
The following result is a generalization of \cite[(1.2) Theorem]{FP} and \cite[Proposition 4.5.2]{Be}. 

\begin{thm} 
\label{thm:fp-bend-generalization}
Let $J\subseteq \Delta$, $P=P_{J}$ be the associated parabolic subgroup, 
and $M\in \operatorname{mod}(P)$. Then 
$$\V_{P_{r}}(M)=P\cdot \V_{B_{r}}(M).$$ 
\end{thm}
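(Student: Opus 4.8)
The plan is to prove the two inclusions $\V_{P_r}(M) \supseteq P \cdot \V_{B_r}(M)$ and $\V_{P_r}(M) \subseteq P \cdot \V_{B_r}(M)$ separately. For the first, easier inclusion, I would use Proposition~\ref{prop:naturality}: since $B_r \subseteq P_r$, restricting $M$ from $P$ to $B$ gives $\V_{B_r}(M) = \V_{B_r} \cap \V_{P_r}(M) \subseteq \V_{P_r}(M)$. Because $\V_{P_r}(M)$ is the support variety of a genuine $P$-module, it is stable under the conjugation action of $P$ (the $P$-action on $\V_{P_r}$ coming from the adjoint action on cohomology, compatible with the identification in Propositions~\ref{prop:VH1} and \ref{prop:VH1M} when $r=1$, and more generally from functoriality of cohomology under inner automorphisms of $P_r$ induced by conjugation by $P$). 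Hence $P \cdot \V_{B_r}(M) \subseteq \V_{P_r}(M)$.

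The reverse inclusion is the substance of the theorem and should follow the strategy of \cite[(1.2) Theorem]{FP} (and its analogue in \cite{Be}), adapted to the non-reductive setting. The Levi decomposition $P = LU$ with $U = U_J$ unipotent and $L = L_J$ reductive is the main structural tool. First I would reduce to understanding $\V_{P_r}$ itself: by Proposition~\ref{prop:VH1} (in the height-one case, and the filtration/spectral-sequence analogue for general $r$ via \cite{SFB1,SFB2}) one has $\V_{P_r} \cong \mathcal{N}_1(\mathfrak{p}_J)$, and every nilpotent $p$-nilpotent element of $\mathfrak{p}_J$ is $P$-conjugate (indeed $L$-conjugate after a $U$-translation) into $\Lie(B)$; more precisely every point of $\V_{P_r}$ lies in $P \cdot \V_{B_r}$. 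Then, given $x \in \V_{P_r}(M)$, I want to show $x \in P \cdot \V_{B_r}(M)$: writing $x = g \cdot x'$ with $g \in P$ and $x' \in \V_{B_r}$, it suffices (after replacing $M$ by the $P$-module $g^{-1}\cdot M \cong M$) to show that if $x' \in \V_{B_r} \cap \V_{P_r}(M)$ then $x' \in \V_{B_r}(M)$ — but that is exactly Proposition~\ref{prop:naturality} again. So the real content is the geometric fact that $\V_{P_r}(M)$, viewed inside $\V_{P_r}$, is covered by $P$-translates of its intersection with $\V_{B_r}$; equivalently, that $\V_{P_r}(M) = P \cdot (\V_{P_r}(M) \cap \V_{B_r}) = P \cdot \V_{B_r}(M)$.

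To nail that geometric fact I would argue as in \cite{FP}: the support variety $\V_{P_r}(M)$ is a closed conical $P$-stable subvariety of $\V_{P_r} = \mathcal{N}_1(\mathfrak{p}_J)$, and one shows that any such subvariety equals the $P$-saturation of its intersection with $\mathcal{N}_1(\Lie(B))$. This uses that $B$ is a Borel of $P$ (so $\mathcal{N}_1(\Lie(B))$ is a \emph{Borel-type} subvariety of $\mathcal{N}_1(\mathfrak{p}_J)$ meeting every $P$-orbit), together with the compatibility of the various $\V_{(-)_r}$ identifications with restriction maps, i.e. Propositions~\ref{prop:naturality}, \ref{prop:VH1}, \ref{prop:VH1M}. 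For general $r > 1$ one passes from $\V_{P_r}$ to $\V_{(P_1)}$-level statements via the projection $\V_{P_r} \to \V_{P_1}$ (or directly uses the $\pi$-points / one-parameter-subgroup description of \cite{SFB1,SFB2} and \cite{FPe}), checking at each stage that the $P$-action and the restriction to $B_r$ behave functorially; this bookkeeping is the one place where the non-reductivity of $P$ must be handled with care but introduces no genuinely new difficulty.

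The main obstacle I anticipate is precisely this last point: verifying that the known reductive-group arguments of \cite{FP} go through when $P$ is a proper parabolic rather than a reductive group, in particular that $\mathcal{N}_1(\Lie(B))$ still meets every $P$-orbit on $\mathcal{N}_1(\mathfrak{p}_J)$ and that the support variety, which a priori is only a $P$-stable closed conical subset, is actually recovered by $P$-saturating its trace on the Borel part. Everything else is an application of naturality (Proposition~\ref{prop:naturality}) plus the explicit descriptions in Propositions~\ref{prop:VH1} and \ref{prop:VH1M}; the hard work is organizing these into the saturation statement and controlling the behaviour under the Frobenius height $r$.
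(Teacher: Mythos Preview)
Your reduction step is sound and is genuinely different from the paper's argument. Once you know that $\V_{P_r}(M)$ is $P$-stable and that $\V_{P_r} = P\cdot\V_{B_r}$, then for any $x\in\V_{P_r}(M)$ you may write $x = g\cdot x'$ with $g\in P$, $x'\in\V_{B_r}$; $P$-stability gives $x' = g^{-1}\cdot x\in\V_{P_r}(M)$, and Proposition~\ref{prop:naturality} then yields $x'\in\V_{B_r}\cap\V_{P_r}(M) = \V_{B_r}(M)$, so $x\in P\cdot\V_{B_r}(M)$. (The detour through ``replacing $M$ by $g^{-1}\cdot M$'' is unnecessary.) This cleanly reduces the theorem for arbitrary $M$ to the single case $M=k$.

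The paper does \emph{not} make this reduction. It follows \cite[(1.2) Theorem]{FP} verbatim: one introduces the ideals $I_M\subseteq\operatorname{H}^\bullet(B_r,k)$, $J_M\subseteq\operatorname{H}^\bullet(P_r,k)$, and the $P$-saturated ideal $K_M=\{x:\ p\cdot\operatorname{res}(x)\in I_M\ \forall p\in P\}$, and shows $K_M\subseteq\sqrt{J_M}$ by a spectral-sequence argument whose only input beyond \cite{FP} is the vanishing $H^m(P/B,-)=0$ for $m>\dim P/B$. This handles all $M$ and all $r$ at once. What your approach buys is conceptual clarity: all the cohomological work is isolated in the case $M=k$, and the passage to general $M$ is formal.

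The gap in your proposal is precisely that case $M=k$ for $r>1$. For $r=1$ your Levi argument is correct: writing $z=x+y\in\mathcal{N}_1(\mathfrak{p}_J)$ with $x\in\mathfrak{l}_J$, $y\in\mathfrak{u}_J$, the projection $x$ is nilpotent in the reductive algebra $\mathfrak{l}_J$, hence some $\ell\in L_J$ carries $x$ into $\Lie(B\cap L_J)$, and then $\ell\cdot z\in\mathfrak{b}$. But your assertion that $\V_{P_r}\cong\mathcal{N}_1(\mathfrak{p}_J)$ for general $r$ is false; for $r>1$ the variety $\V_{P_r}$ is the scheme of height-$r$ infinitesimal one-parameter subgroups, not the restricted nullcone. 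Your suggestion to ``pass via the projection $\V_{P_r}\to\V_{P_1}$'' or use $\pi$-points does not obviously deliver $\V_{P_r}=P\cdot\V_{B_r}$: one still has to show that every height-$r$ one-parameter subgroup of $P$ is $P$-conjugate to one landing in $B$, and the natural route to that is again the \cite{FP}-style spectral sequence comparing $\operatorname{H}^\bullet(P_r,k)$ with $\operatorname{H}^\bullet(B_r,k)$ via $P/B$. At that point you have essentially reproduced the paper's proof for $M=k$, so the saving is organizational rather than substantive.
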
 

\begin{proof} The proof follows along the same line of reasoning as in \cite[(1.2) Theorem]{FP}. 
We will indicate what modifications are necessary. 
Let $\Psi=\text{res}^{*}:\V_{B_{r}}(M)\rightarrow 
{\mathcal V}_{P_{r}}(M)$ be the map on varieties induced from the 
restriction map $\text{res}:\text{H}^{\bullet}(P_{r},k)\rightarrow 
\text{H}^{\bullet}(B_{r},k)$. According to 
\cite[(1.6), (5.11)]{SFB1}, we can identify ${\mathcal V}_{B_{r}}(M)$ with 
$\Psi({\mathcal V}_{B_{r}}(M))$ in ${\mathcal V}_{P_{r}}(M)$. Since ${\mathcal V}_{P_{r}}(M)$ is invariant 
under $P$ we have 
$$P\cdot {\mathcal V}_{B_{r}}(M)\subseteq {\mathcal V}_{P_{r}}(M).$$ 
We need to show that the reverse inclusion holds. 

Following the proof of \cite[(1.2) Theorem]{FP}, set 
\begin{eqnarray*} 
I_{M}&=&\text{ker}\{\text{H}^{\bullet}(B_{r},k)\rightarrow \text{Ext}^{\bullet}_{B_{r}}(M,M)\}\\ 
J_{M}&=&\text{ker}\{\text{H}^{\bullet}(P_{r},k)\rightarrow \text{Ext}^{\bullet}_{P_{r}}(M,M)\}\\ 
K_{M}&=&\{x\in \text{H}^{\bullet}(P_{r},k):\ p\cdot \text{res}(x)\in I_{M} \ \forall p\in P\}\\ 
L_{M}&=&\{x\in \text{H}^{\bullet}(P_{r},k):\ p\cdot \text{res}(x)\in \sqrt{I_{M}} \ \forall p\in P\}
\end{eqnarray*} 

Now replace ``$G$'' by ``$P$'', remove the ``symmetric algebras'', and use the fact 
that $H^{m}(P/B,-)=0$ for $m> \dim P/B$. Then we can conclude that 
$K_{M}\subseteq \sqrt{J_{M}}$, thus ${\mathcal V}_{P_{r}}(M)\subseteq  
P\cdot {\mathcal V}_{B_{r}}(M)$.    
\end{proof}

\subsection{}
\label{subsec:npv-541}
For $M$ a rational $B$-module, the relationship between the (relative) $B_r$ support variety of a module induced from $M$ and the $G_r$ support variety is described in \cite[Theorem 5.4.1]{NPV}. We generalize this result to the parabolic case as follows.

\begin{thm} \label{thm:npv-541}
Let $M$ be a rational $B$-module and $P$ be a parabolic subgroup of $G$ which contains $B$. Suppose that $R^m \ind^P_B M = 0$ for $m \ne t$, where $t$ is a fixed integer. Then,
\[ \V_{P_r}( R^t \ind^P_B M ) = P \cdot \V_{B_r}( R^t \ind^P_B M, M) .\]
\end{thm}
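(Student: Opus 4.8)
The plan is to adapt the proof of \cite[Theorem 5.4.1]{NPV} to the parabolic setting, using the two results just developed as the main new inputs: Theorem~\ref{thm:fp-bend-generalization} (which replaces $G$ by $P$ in the Friedlander--Parshall ``$G$-saturation'' statement) and Proposition~\ref{prop:ind-vs-cohomology}(ii) (which supplies the vanishing of $R^i\ind^P_B$ above $\dim P/B$, needed wherever the original argument used the finite-dimensionality of $G/B$). Write $N = R^t\ind^P_B M$, a rational $P$-module, so by Theorem~\ref{thm:fp-bend-generalization} we have $\V_{P_r}(N) = P\cdot\V_{B_r}(N)$. Thus it suffices to prove $\V_{B_r}(N) \subseteq P\cdot\V_{B_r}(N,M)$ together with the reverse-type inclusion $P\cdot\V_{B_r}(N,M)\subseteq \V_{P_r}(N)$; since $\V_{B_r}(N,M)\subseteq \V_{B_r}$ and the right-hand side is $P$-invariant, the latter amounts to showing $\V_{B_r}(N,M)\subseteq \V_{P_r}(N)$ after identifying $\V_{B_r}$ inside $\V_{P_r}$.

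First I would establish the inclusion $\V_{B_r}(N,M)\subseteq \V_{B_r}(N)$, hence $\subseteq\V_{P_r}(N)$. For this, the key device is Frobenius reciprocity / the tensor identity combined with the hypothesis that $\ind^P_B M$ is concentrated in degree $t$: the spectral sequence $\Ext^i_{P_r}(L, R^j\ind^P_B M) \Rightarrow \Ext^{i+j}_{B_r}(L, M)$ (for $L$ a $P$-module, using that $R^j\ind^P_B$ vanishes except at $j=t$) collapses, giving a degree-shifted isomorphism of $\Ext$-modules over $R=\opH^\bullet(P_r,k)$. Taking $L=N$ and comparing annihilators shows $\V_{P_r}(N,N)$ controls $\V_{B_r}(N,M)$; tracking the $R$-module structure through the collapse yields $\V_{B_r}(N,M)\subseteq \V_{P_r}(N)$, and intersecting with $\V_{B_r}$ (Proposition~\ref{prop:naturality}) gives what we want. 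The role of the single-degree hypothesis here is exactly to make this spectral sequence degenerate so the annihilator comparison is clean.

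For the reverse inclusion $\V_{P_r}(N)\subseteq P\cdot\V_{B_r}(N,M)$ I would run the ideal-theoretic argument of \cite[(1.2)/Theorem 5.4.1]{NPV} with the four ideals $I,J,K,L$ defined as in the proof of Theorem~\ref{thm:fp-bend-generalization} but now with $I_M = \ker\{\opH^\bullet(B_r,k)\to\Ext^\bullet_{B_r}(N,M)\}$ (the relative version) in place of $I$ built from $\Ext(M,M)$. The point is that an element $x\in\opH^\bullet(P_r,k)$ whose $P$-translates all restrict into $\sqrt{I_M}$ must lie in $\sqrt{J_N}$; this is proved by the same cohomology-of-$P/B$ estimate, now legitimate because $\opH^m(P/B,-)=0$ for $m>\dim P/B$ by Proposition~\ref{prop:ind-vs-cohomology}(ii), and because the relevant induced sheaf on $P/B$ has cohomology concentrated in degree $t$ by hypothesis, so the relevant Grothendieck spectral sequence again collapses and the concatenation/cup-product bookkeeping goes through verbatim.

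The main obstacle I expect is not any single inclusion but the careful bookkeeping of the $R$-module structures and degree shifts through the collapsed spectral sequences: one must check that the Yoneda action of $R=\opH^\bullet(P_r,k)$ on $\Ext^\bullet_{B_r}(N,M)$ matches (up to the degree-$t$ shift) its action on $\Ext^\bullet_{P_r}(N,N)$ under the reciprocity isomorphism, since the whole argument compares radicals of annihilator ideals of these $R$-modules. A secondary subtlety is that $N$ is only a $P$-module (not a $G$-module), so one cannot invoke $G$-equivariance anywhere — every step must be done inside the category $\mod(P)$, which is precisely why Theorem~\ref{thm:fp-bend-generalization} rather than the original Friedlander--Parshall result is the tool of choice. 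Modulo these points, the proof is a faithful parabolic transcription of \cite[Theorem 5.4.1]{NPV}.
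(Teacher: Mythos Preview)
Your approach is essentially the paper's: both are transcriptions of \cite[Theorem 5.4.1]{NPV} with $G$ replaced by $P$, relying on the vanishing of higher derived induction above degree $\dim P/B$ supplied by Proposition~\ref{prop:ind-vs-cohomology}(ii). Two points of divergence are worth flagging. First, the paper does not route through Theorem~\ref{thm:fp-bend-generalization}; the equality $\V_{P_r}(N)=P\cdot\V_{B_r}(N,M)$ is obtained directly from the spectral-sequence and annihilator-ideal argument in \cite{NPV}, so your preliminary reduction via $\V_{P_r}(N)=P\cdot\V_{B_r}(N)$ is an unnecessary (though harmless) detour. Second, and more substantively, the spectral sequence you write down is not the one that drives the argument. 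The correct device is
\[
E_2^{m,n}=R^m\ind^{P/P_r}_{B/B_r}\Ext^n_{B_r}(N,M)\;\Rightarrow\;\Ext^{m+n-t}_{P_r}(N,N),
\]
living over the Frobenius quotients $B/B_r\hookrightarrow P/P_r$, and it is the vanishing of $R^m\ind^{P/P_r}_{B/B_r}$ for $m>\dim P/B$ that makes the associated filtration finite. Your proposed sequence $\Ext^i_{P_r}(L,R^j\ind^P_B M)\Rightarrow\Ext^{i+j}_{B_r}(L,M)$ conflates induction along $B\subset P$ with induction along $B_r\subset P_r$ and is not valid as stated. Once you replace it with the correct spectral sequence above, the remainder of your outline matches the paper's argument.
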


\begin{proof}
The proof of \cite[Theorem 5.4.1]{NPV} is formal and carries over after replacing $G$ by $P$. 
The main issue involves the use of a spectral sequence which in our case is:
\[ 
E_2^{m,n}=R^m\ind^{P/P_r}_{B/B_r}\Ext^n_{B_r}(R^t 
\ind_B^P M,M)\Rightarrow
\Ext^{m+n-t}_{P_r}(R^t\ind_B^P M,R^t\ind_B^P M), 
\]
and an increasing filtration whose finiteness depends on a vanishing result,
\[ R^m \ind^{P/P_r}_{B/B_r} = 0 \quad \text{for} \quad m > \dim P/B. \]
This vanishing result holds by Proposition \ref{prop:ind-vs-cohomology}(ii).
\end{proof}

\section{$G$-Saturation}

\subsection{} We are interested in determining the support varieties $\V_{B_1}(H^0(X(w), \Ll))$ for all $w \in W$ 
and $\lambda \in X_+$. In particular, we want to understand the inclusion relations among support varieties for different $w$ and $\lambda$ 
of particular interest. In some instances we will use $H^{0}(w,\lambda):=H^{0}(X(w),{\mathcal L}(\lambda))$ as 
a short hand notation. In the following theorem, we prove that for a fixed weight $\lambda$, the inclusion relation on 
the $G$-saturation of support varieties for Demazure modules respects the Bruhat order on $W$. 

\begin{thm}
\label{thm:saturation}
Let $\lambda \in X_+$ and $w_1 < w_2 $ in the Bruhat order on $W$. Then,
\[
G \cdot \V_{B_r}(H^0(w_2,\lambda)) \subseteq G \cdot \V_{B_r}(H^0(w_1,\lambda)) .
\]
\end{thm}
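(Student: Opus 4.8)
The plan is to reduce the Bruhat-order statement to the covering case $w_1 < w_2$ with $\ell(w_2) = \ell(w_1)+1$, since a general Bruhat relation $w_1 < w_2$ is a composite of such covers and the inclusion of $G$-saturated support varieties is transitive. So assume $w_2 = s_\alpha w_1$ (or $w_1 s_\alpha$, arranged so that $w_1$ is the shorter element) for a simple reflection $s_\alpha$, with associated minimal parabolic $P = P_{\{\alpha\}}$. The key geometric input is that the Demazure module $H^0(w_2,\lambda)$ can be built from $H^0(w_1,\lambda)$ by one step of parabolic induction: up to the usual identifications in \cite[II, Ch. 14]{Jan}, $H^0(w_2,\lambda) \cong R^0\ind_B^P\bigl(H^0(w_1,\lambda)\bigr)$ when $w_2 = s_\alpha w_1 > w_1$, with the higher derived functors $R^m\ind_B^P(H^0(w_1,\lambda))$ vanishing for $m>0$ (Demazure vanishing / Proposition \ref{prop:jan-cohomology} applied on the $\mathbb P^1$-fibration $P/B$, together with Proposition \ref{prop:ind-vs-cohomology}(ii)). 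This is exactly the hypothesis needed to invoke Theorem \ref{thm:npv-541} with $t=0$.

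Given that, the chain of inclusions I would write down is: first, by Theorem \ref{thm:npv-541},
\[
\V_{P_r}(H^0(w_2,\lambda)) = P\cdot \V_{B_r}\bigl(H^0(w_2,\lambda),\, H^0(w_1,\lambda)\bigr).
\]
Second, $\V_{B_r}(H^0(w_2,\lambda), H^0(w_1,\lambda)) \subseteq \V_{B_r}(H^0(w_1,\lambda))$ because a relative support variety $\V_A(M,M')$ is always contained in $\V_A(M')$ (the annihilator of the $R$-action on $\Ext^\bullet_A(M,M')$ contains the annihilator of the $R$-action on $\Ext^\bullet_A(M',M')$ — tensoring an extension class with $M'$ and concatenating factors through the ring $\End$-action on $M'$; this is standard, cf. \cite[Section 2.6]{Ben}). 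Third, by Theorem \ref{thm:fp-bend-generalization} applied to $M = H^0(w_2,\lambda)\in\mod(P)$, we have $\V_{P_r}(H^0(w_2,\lambda)) = P\cdot \V_{B_r}(H^0(w_2,\lambda))$. Combining these,
\[
P\cdot \V_{B_r}(H^0(w_2,\lambda)) \subseteq P\cdot \V_{B_r}(H^0(w_1,\lambda)),
\]
and applying $G\cdot(-)$ to both sides and using $G\cdot P\cdot S = G\cdot S$ yields the desired inclusion for the covering case.

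The main obstacle I anticipate is not any of these formal manipulations but rather pinning down precisely that $H^0(w_2,\lambda)$ is a $P$-module realized as $\ind_B^P$ of $H^0(w_1,\lambda)$ with the correct vanishing of higher inductions — i.e., making the identification $H^0(X(w_2),\Ll) \cong R^0\ind_B^P H^0(X(w_1),\Ll)$ rigorous. This requires care with the iterated-induction construction of Demazure modules and with which coset representative is being used: one needs $w_1 \in W^{\{\alpha\}}$ (or to pass to such a representative) so that $X(w_2)_P = X(w_1)_P$ as varieties in $G/B$, and then Proposition \ref{prop:jan-cohomology}(ii) identifies the cohomology on $X(w_2)$ with cohomology on the $P/B$-bundle over $X(w_1)_P$, whose fibers are $\mathbb P^1$'s — giving the vanishing in positive degrees from the line bundle being of non-negative degree along the fibers (Demazure's vanishing theorem, as attributed to Mehta–Ramanathan and Andersen in the introduction). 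Once this input is secured, the rest is a short diagram chase through Theorems \ref{thm:fp-bend-generalization} and \ref{thm:npv-541}. I would also remark, as the paper promises in Example 4.1.2, that the $G$-saturation is genuinely needed: the inclusion $\V_{B_r}(H^0(w_2,\lambda)) \subseteq \V_{B_r}(H^0(w_1,\lambda))$ can fail without applying $G\cdot(-)$, because Theorem \ref{thm:fp-bend-generalization} only recovers $\V_{B_r}$ up to $P$-saturation, not on the nose.
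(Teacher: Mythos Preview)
Your proof is correct and follows essentially the same route as the paper: reduce to a Bruhat cover $w_2 = s_\alpha w_1$, use the identification $H^0(w_2,\lambda)\cong \ind_B^{P_\alpha}H^0(w_1,\lambda)$ (with higher vanishing) to invoke Theorem~\ref{thm:npv-541}, combine with Theorem~\ref{thm:fp-bend-generalization} and the relative-support inclusion $\V_{B_r}(M,M')\subseteq \V_{B_r}(M')$, and then $G$-saturate. Your write-up is in fact slightly more explicit than the paper's about the vanishing hypothesis needed for Theorem~\ref{thm:npv-541} and about why $G$-saturation is essential.
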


\begin{proof}
By induction on $\ell(w_2) - \ell(w_1)$, it suffices to prove the result when 
$w_2 = s_\alpha w_1$ and $\ell(w_2) = \ell(w_1) + 1$. Let $P_\alpha$ be the minimal parabolic corresponding to 
$\alpha$. By Theorem~\ref{thm:fp-bend-generalization},
\begin{equation}
\label{eq:sat1}
\V_{(P_\alpha)_r}( H^0(w_2,\lambda) ) = P_\alpha \cdot \V_{B_r}( H^0(w_2,\lambda) ) .
\end{equation}
Since $H^0(w_2,\lambda) \cong \ind^{P_\alpha}_B H^0(w_1,\lambda)$, Theorem \ref{thm:npv-541} implies:
\begin{equation}
\label{eq:sat2}
\begin{array}{ll}
P_\alpha \cdot \V_{B_r}( H^0(w_2, \lambda) ) & =  P_\alpha \cdot \V_{B_r}( H^0(w_2, \lambda) , H^0(w_1, \lambda) )  \\
& \subseteq P_\alpha \cdot \V_{B_r}( H^0(w_1, \lambda) ).
\end{array}
\end{equation}
Combining \eqref{eq:sat1} and \eqref{eq:sat2} we have
\[ \V_{(P_\alpha)_r}( H^0(w_2, \lambda) ) \subseteq P_\alpha \cdot \V_{B_r}( H^0(w_1, \lambda) ) ,\]
so acting by $G$ on both sides we certainly have:
\[ G \cdot \V_{(P_\alpha)_r}( H^0(w_2, \lambda) ) \subseteq G \cdot \V_{B_r}( H^0(w_1, \lambda) ) .\]
Finally, by (3.1.1) $\V_{B_r}(M) \subseteq \V_{(P_\alpha)_r}(M)$ for all $M \in \mod(P_\alpha)$. Thus,
\[ G \cdot \V_{B_r}( H^0(w_2, \lambda) ) \subseteq G \cdot \V_{B_r}( H^0(w_1, \lambda) ) .\]
\end{proof}

\subsection{} We should remark that the result above is rather subtle in the sense that inclusion of the 
$B_{1}$-support varieties of Demazure modules need not be preserved under the Bruhat order. 
This can be seen in the following example. 

\begin{example}
\label{ex:A2-steinberg}
Let $p\geq 3$, $\lambda = (p-1)\rho$ (the Steinberg weight), and $G = SL(3)$.
Let ${\mathfrak u}_{\alpha}$ (resp. ${\mathfrak u}_{\beta}$) be the 
unipotent radical of the Lie algebra of $P_{\alpha}$ (resp. $P_{\beta}$). 
The computation in Section \ref{subsec:A2} gives the support varieties 
$\V_{B_1}(H^0(w, (p-1)\rho))$ for all $w \in W$, see Table 1 (below).

\begin{table}[H]
\label{tab:A2-steinberg}
\begin{tabular}{c|c}
$w$ & $\V_{B_1}(H^0(w, (p-1)\rho))$ \\
\hline
$e$        & $\mathfrak{u}$ \\
$s_\alpha$ & $\mathfrak{u}_\alpha$ \\
$s_\beta$  & $\mathfrak{u}_\beta$ \\
$s_\alpha s_\beta$ & $\mathfrak{u}_\alpha \cup \mathfrak{u}_\beta$ \\
$s_\beta s_\alpha$ & $\mathfrak{u}_\alpha \cup \mathfrak{u}_\beta$ \\
$w_0$ & $\{0\}$
\end{tabular}
\vspace{10pt}
\caption{Support varieties for Demazure modules in type $A_2$ with highest
weight $(p-1)\rho$.}
\end{table}

In particular, the pair $s_\beta$ and $s_\alpha s_\beta$ illustrate that $w_1 <
w_2$ does not neccesarily imply that $\V_{B_1}(H^0(w_2,\lambda)) \subseteq
\V_{B_1}(H^0(w_1,\lambda))$. Note, however, that the saturations in these two
cases agree:
\[ G \cdot \mathfrak{u}_\beta = G \cdot \mathfrak{u}_\alpha = G \cdot
(\mathfrak{u}_\alpha \cup \mathfrak{u}_\beta) .\]
\end{example}

\subsection{The Regular Case}

Fix a dominant weight $\lambda$. The subset 
\[ \Phi_{\lambda, p} = \{ \alpha \in \Phi^+ \mid (\lambda + \rho, \alpha^\vee) \in p\mathbb{Z} \} \]
is a subroot system of $\Phi$ which, when the prime $p$ is good relative to $\Phi$, and conjugate under the Weyl group to a
root system $\Phi_I$ spanned by a subset $I \subseteq \Delta$ of simple roots, see \cite[Prop. 24, pg. 165]{Bo}. The weight $\lambda$ is called \emph{$p$-regular} if $\Phi_{\lambda,p} = \emptyset$. 

\begin{prop} \label{prop:regular-case} Let $\lambda$ be a $p$-regular weight in $X_{+}$, 
then $\V_{B_1}(H^0(w,\lambda)) =\V_{B_{1}}$. 
\end{prop}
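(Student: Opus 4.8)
The goal is to show that for a $p$-regular dominant weight $\lambda$, the Demazure module $H^0(w,\lambda)$ has full support $\V_{B_1}$ over $B_1$, for every $w \in W$. The plan is to reduce first to the case $w = e$ (where $H^0(e,\lambda) = \lambda$ is just the one-dimensional $B$-module), and then to compute the support of a one-dimensional $B$-module directly using the concrete description in Proposition~\ref{prop:VH1M}.

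\emph{Step 1: Monotonicity in the Bruhat order.} For the identity element $e \in W$ we have $e \leq w$ for all $w$, so Theorem~\ref{thm:saturation} gives $G \cdot \V_{B_1}(H^0(w,\lambda)) \subseteq G \cdot \V_{B_1}(H^0(e,\lambda))$. That only controls the $G$-saturation, not the $B_1$-support itself, so this alone is not enough. Instead I would argue the other direction: show $\V_{B_1}(H^0(e,\lambda))$ is already everything, and then use a functorial restriction/monotonicity argument to push fullness \emph{up} the Bruhat order. Indeed $H^0(s_\alpha w', \lambda) \cong \ind^{P_\alpha}_B H^0(w',\lambda)$, and for a module induced from $B$ to $P_\alpha$ (with the higher derived functors vanishing, which holds here since $\lambda \in X_+$ by Proposition~\ref{prop:jan-cohomology}(iii)), one has, by the cohomological naturality of support varieties and the fact that restriction of $H^0(s_\alpha w',\lambda)$ to $B$ contains $H^0(w',\lambda)$ as a direct summand (or as a sub/quotient whose $\Ext$-support injects appropriately), that $\V_{B_1}(H^0(w',\lambda)) \subseteq \V_{B_1}(H^0(s_\alpha w',\lambda))$. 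So it suffices to establish the base case $w = e$.

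\emph{Step 2: The base case.} Here $H^0(e,\lambda) = k_\lambda$, the one-dimensional $B$-module on which $B$ acts by the character $\lambda$. I would use Proposition~\ref{prop:VH1M}: $\V_{B_1}(k_\lambda)$ is (homeomorphic to) the set of $x \in \N(\mathfrak{b})$ for which $k_\lambda$ is not $x$-projective, together with $0$. For a nilpotent $x$ lying in a root subalgebra $\mathfrak{g}_{-\gamma}$ (a generator of $\Lie B$), restricting $k_\lambda$ along the corresponding $\mathbb{G}_{a(1)} \cong k[x]/x^p$ subgroup gives the module $k$ with a possibly twisted action; the key point is that $k_\lambda$ restricted to such a one-parameter subgroup is $x$-projective if and only if it is free over $k[x]/(x^p)$, which for a one-dimensional module happens only in degenerate cases controlled by divisibility of $(\lambda,\gamma^\vee)$ by $p$. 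The $p$-regularity hypothesis $\Phi_{\lambda,p} = \emptyset$ — i.e. $(\lambda+\rho,\alpha^\vee) \notin p\mathbb{Z}$ for every positive root $\alpha$ — is precisely what rules out this degeneration along every root direction, forcing non-projectivity along a dense set of lines through the origin in $\N(\mathfrak{b})$, hence $\V_{B_1}(k_\lambda) = \N(\mathfrak{b}) = \V_{B_1}$.

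\emph{Main obstacle.} The delicate point is Step 1's direction of monotonicity: Example~\ref{ex:A2-steinberg} warns that $B_1$-supports need \emph{not} grow along the Bruhat order in general (there $H^0(w_0,\lambda) = \{0\}$ while $H^0(e,\lambda) = \mathfrak{u}$). What saves the regular case is that the base case is already the whole space, so there is nothing larger to shrink to; the induction step only needs the containment $\V_{B_1}(H^0(w',\lambda)) \subseteq \V_{B_1}(\ind^{P_\alpha}_B H^0(w',\lambda))$, which follows from the behavior of relative support varieties under induction together with Theorem~\ref{thm:npv-541} and Proposition~\ref{prop:naturality}. Making the $\Ext$-support inclusion precise — identifying $H^0(w',\lambda)$ inside the $B$-restriction of its induction and tracking the Yoneda action of $\opH^\bullet(B_1,k)$ — is the step that requires genuine care; the nilpotent-orbit computation in Step 2 is then essentially the classical $A_1$-type calculation of $\V_{B_1}(k_\lambda)$ applied root by root.
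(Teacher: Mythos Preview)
Your Step~1 contains a genuine gap. The inclusion you need, $\V_{B_1}(H^0(w',\lambda)) \subseteq \V_{B_1}(\ind_B^{P_\alpha} H^0(w',\lambda))$, is simply false in general, and none of the results you cite establish it. Theorem~\ref{thm:npv-541} gives $\V_{(P_\alpha)_1}(\ind_B^{P_\alpha} M) = P_\alpha\cdot\V_{B_1}(\ind_B^{P_\alpha} M, M) \subseteq P_\alpha\cdot\V_{B_1}(M)$, which is the \emph{opposite} direction after intersecting with $\V_{B_1}$ via Proposition~\ref{prop:naturality}. The evaluation map $\ind_B^{P_\alpha} M \twoheadrightarrow M$ only shows $M$ is a quotient, not a summand, and the two-out-of-three property for short exact sequences does not produce the inclusion you want. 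Concretely, Example~\ref{ex:A2-steinberg} already breaks your induction step: for $\lambda=(p-1)\rho$, take $w'=s_\alpha s_\beta$ so that $H^0(s_\beta w',\lambda)=H^0(w_0,\lambda)$; then $\V_{B_1}(H^0(w',\lambda))=\mathfrak{u}_\alpha\cup\mathfrak{u}_\beta$ while $\V_{B_1}(H^0(w_0,\lambda))=\{0\}$. Your remark that ``in the regular case the base case is already the whole space, so there is nothing larger to shrink to'' is circular: it presupposes the statement you are trying to prove.

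The paper's argument runs in the other direction along the Bruhat order. One compares $w$ to $w_0$, where Theorem~\ref{thm:saturation} gives $G\cdot\V_{B_1}(H^0(w_0,\lambda))\subseteq G\cdot\V_{B_1}(H^0(w,\lambda))$. Since $H^0(w_0,\lambda)=H^0(\lambda)$ is a $G$-module, its $G$-saturated $B_1$-support equals $\V_{G_1}(H^0(\lambda))$, and regularity of $\lambda$ forces this to be the full nullcone $\V_{G_1}$ (using \cite[Prop.~4.1.2]{NPV} and $p\ge h$). Thus $G\cdot\V_{B_1}(H^0(w,\lambda))$ is the whole nilpotent cone, so the $B$-stable closed conical variety $\V_{B_1}(H^0(w,\lambda))\subseteq\mathfrak{u}$ must contain a regular nilpotent; $B$-stability then forces it to be all of $\mathfrak{u}=\V_{B_1}$. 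This is where regularity is genuinely used.

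A smaller point: your Step~2 reaches the right conclusion but for the wrong reason. Since $\mathfrak{u}$ acts trivially on the one-dimensional module $k_\lambda$, the restriction of $k_\lambda$ to any $\langle x\rangle$ with $0\ne x\in\N(\mathfrak{u})$ is the trivial module, which is never projective over $k[x]/(x^p)$. Hence $\V_{B_1}(k_\lambda)=\V_{B_1}$ for \emph{every} $\lambda$, with no appeal to $p$-regularity or to divisibility of $(\lambda,\gamma^\vee)$.
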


\begin{proof} If $w_0$ denotes the
  longest element of the Weyl group, then $w \le w_0$ so Theorem
  \ref{thm:saturation} gives us an inclusion of the saturated
  supports:
\[ G \cdot \V_{B_1}(H^0(w_0, \lambda)) \subseteq G \cdot
\V_{B_1}(H^0(w, \lambda)).\] Since $X(w_0) = G/B$, $H^0(w_0, \lambda)$
is a $G$-module and we have 
$$\V_{G_1}(H^0(w_0, \lambda)) =\V_{G_1}(H^0(G/B, \Ll)).$$  
Moreover, by \cite[(1.2) Theorem]{FP},
$\V_{G_1}(H^0(G/B, \Ll)) = G \cdot \V_{B_1}(H^0(G/B, \Ll))$.  Putting
these results together, we have
\begin{equation} 
\label{eq:regular-case-eq-1}
\V_{G_1}(H^0(w_0, \lambda)) \subseteq  G \cdot \V_{B_1}(H^0(w,
\lambda)) .
\end{equation}
Since $\lambda$ is $p$-regular, $\V_{G_1}(H^0(w_0, \lambda)) = \V_{G_{1}}$ by 
\cite[Proposition (4.1.2)]{NPV} and thus 
\[  \V_{G_{1}} \subseteq G \cdot \V_{B_1}(H^0(w, \lambda)) \subseteq \V_{G_{1}}.\]
Therefore, we must have 
\[ G \cdot \V_{B_1}(H^0(w, \lambda))=\V_{G_{1}}. \]
Since $\lambda$ is $p$-regular we have $p\geq h$ (cf. \cite[6.2 (9)]{Jan}), and 
$\V_{G_{1}}$ identifies with the nilpotent cone in ${\mathfrak g}$. Therefore, 
the closed conical $B$-stable variety $\V_{B_1}(H^0(w, \lambda))$ must contain a regular nilpotent element.  
It follows that 
$$\V_{B_1}(H^0(w, \lambda))={\mathfrak u}={\mathcal N}_{1}({\mathfrak u})=\V_{B_{1}}.$$ 
\end{proof}

\subsection{The Root System $A_{1}$}
\label{subsec:A1}

We conclude this section by illustrating Proposition
\ref{prop:regular-case} in the situation when the root
system $\Phi$ is $A_1$ (i.e., for the group $G = \operatorname{SL}(2)$).

Let $G = \operatorname{SL}(2)$ and $\lambda$ be a dominant integral
weight (represented by a non-negative integer). In
this case $G/B \cong \mathbb{P}^1$ and $W = \{ e, s_\alpha \}$. 

Let $w=e$, we have $X(e) = eB/B \cong \{ \text{pt}. \}$. It follows
that $\dim H^0(w, \lambda) = 1$ and so by the rank variety 
description, $\V_{B_1}(H^0(w,\lambda)) = {\mathfrak u}$ which is  
independent of $\lambda$.

The case $w = s_\alpha$ is the long element of $W$ so we have
$X(s_\alpha) = G/B$. Now the weight $\lambda$ is $p$-regular if
and only if $p \nmid \lambda + 1$. So by Proposition 
\ref{prop:regular-case}, $\V_{B_1}(H^0(s_\alpha, \lambda)) = \V_{B_1}
= \mathfrak{u}$ unless $p \mid \lambda + 1$. When $p \mid \lambda +
1$, a simple application of \cite[Theorem 6.2.1]{NPV} gives
\[ \V_{B_1}(H^0(w, \lambda ) ) =  \{ 0 \} .\]

We summarize the situation for type $A_1$ in Table \ref{tab:A1}.

\begin{table}[!ht]
\begin{tabular}{|c||c|c|c|}
  \hline
  & $w$ & & \\
\hline
\hline
$\mathbf{\lambda}$ &    & $p \nmid \lambda+1$ & $p \mid \lambda+1$ \\
\hline
& $e$ & $\mathfrak{u}$     & $\mathfrak{u}$ \\
\hline
& $s_\alpha$ & $\mathfrak{u}$ & $\{0\}$ \\
\hline
\end{tabular}

\bigskip
\caption{Calculation of support varieties for all Demazure modules in
  type $A_1$.}
\label{tab:A1}
\end{table}

\section{Calculation of Support Varieties}
\label{sec:calc}

In this section we determine the support varieties of Demazure modules for arbitrary 
reductive groups $G$ when the underlying Schubert scheme corresponds either to the longest element in $W_I$ (for any $I\subseteq\Delta$) or to the longest element in $W^J$ (for certain subsets $J \subseteq \Delta$).

\subsection{Long elements in $W^J$} Let $\lambda \in X_+$ and define 
\[ J_\lambda = \left\{ \alpha \in \Delta \mid  \langle \lambda, \alpha^\vee \rangle = 0 \right\}. \]
For any subset $J \subseteq \Delta$, let $w_{0,J}$ denote
the Weyl group element of maximal length in $W^J$. 

\begin{prop} \label{prop:w-is-w0J}  There is an isomorphism of $B$-modules
  \[ H^0(X(w_{0,J_\lambda})_{P_{J_\lambda}}, \Ll) \cong H^0(G/B, \Ll) .\]
\end{prop}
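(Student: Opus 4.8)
The plan is to exhibit the claimed isomorphism via the resolution-of-singularities picture in \eqref{fig:resolution-diagram}. Write $J = J_\lambda$ and $P = P_J$, so that by definition $\langle \lambda, \alpha^\vee\rangle = 0$ for all $\alpha \in J$, and hence there is a line bundle $\Ll_P$ on $G/P$ pulling back to $\Ll$ on $G/B$. The key point is that $w_{0,J}$ is, by construction, the longest element of $W^J$, so its Schubert variety fills up all of $G/P$: I would first verify $X(w_{0,J})_P = G/P$. Indeed, the Bruhat decomposition $G/P = \bigcup_{y \in W^J} B\dot y P/P$ expresses $G/P$ as a union of cells indexed by $W^J$, and the cell of maximal length is open and dense; its closure is therefore all of $G/P$, and since $G/P$ is irreducible this forces $X(w_{0,J})_P = G/P$.

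Given this, the proof reduces to identifying $H^0(G/P, \Ll_P)$ with $H^0(G/B, \Ll)$ as $B$-modules. This is exactly the canonical isomorphism recorded in Section~\ref{subsec:schubert}: since the quotient map $\pi_P \colon G/B \to G/P$ is locally trivial and $\Ll_P$ pulls back to $\Ll$, one has $H^0(G/B, \Ll) \cong H^0(G/P, \Ll_P)$ by \cite[I~5.17]{Jan}. The map is $G$-equivariant, hence in particular $B$-equivariant, so it is the desired isomorphism of $B$-modules.

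One should be slightly careful about the distinction between $X(w_{0,J})_P$ (a Schubert variety in $G/P$) and $X(\dot w_{0,J})$ (a Schubert variety in $G/B$), since the statement refers to $H^0(X(w_{0,J_\lambda})_{P_{J_\lambda}}, \Ll)$ with $\Ll$ — which literally is a bundle on $G/B$. The cleanest reading, consistent with Proposition~\ref{prop:jan-cohomology}(ii) and (iv) and with the notational conventions of \cite[II.14.15]{Jan}, is that $H^0(X(y)_P, \Ll)$ denotes $H^0(X(y)_P, \Ll_P)$ whenever $\lambda$ kills the coroots in $J$; under that reading the argument above is complete. If instead one wants to work entirely on $G/B$, one uses Proposition~\ref{prop:jan-cohomology}(ii) to rewrite $H^0(X(w_{0,J})_P, \Ll_P) \cong H^0(X(\dot w_{0,J}), \pi_P^*\Ll_P) = H^0(X(\dot w_{0,J}), \Ll)$, and then notes that $\dot w_{0,J}$, the minimal-length coset representative of the longest element of $W^J$, satisfies $\ell(\dot w_{0,J}) = \ell(w_0) - \ell(w_{0,J'})$ for the appropriate $W_J$-longest element; combined with $w_0 \in X(\dot w_{0,J})$-considerations one still lands on $H^0(G/B, \Ll)$ via the analogue of Proposition~\ref{prop:jan-cohomology}(iv).

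The main obstacle is purely bookkeeping: pinning down precisely which Schubert variety ($X(w_{0,J})_P$ versus its preimage $X(\dot w_{0,J}) \subseteq G/B$) is meant by the notation, and confirming that "longest element of $W^J$" really does give the dense cell in $G/P$. Once those two points are nailed down, everything else is the standard locally-trivial-fibration / \cite[I~5.17]{Jan} identification, which is $G$-equivariant and a fortiori $B$-equivariant. There is no deep input beyond the normality and cell structure already collected in Proposition~\ref{prop:jan-cohomology}.
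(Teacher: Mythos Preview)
Your proposal is correct and uses essentially the same ingredients as the paper: the observation that $X(w_{0,J})_P = G/P$ since $w_{0,J}$ is the longest element of $W^J$, together with the local-triviality isomorphism $H^0(G/P,\Ll_P)\cong H^0(G/B,\Ll)$ from \cite[I~5.17]{Jan}. The paper packages these two facts into the commutative square \eqref{fig:cohomology-diagram} (using Proposition~\ref{prop:jan-cohomology}(ii) for the left vertical map) and concludes that the restriction $j^*$ is an isomorphism, but the substance is identical to your direct argument; your extended discussion of the $G/B$ alternative is unnecessary.
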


\begin{proof}
  For simplicity, let $w = w_{0,J_\lambda}$ and $P = P_{J_\lambda}$ for the remainder of this section.
The resolution diagram \eqref{fig:resolution-diagram} induces a diagram involving cohomology groups:
\begin{equation} \label{fig:cohomology-diagram}
\xymatrix{
H^0(X(\dot{w}),\Ll) & H^0(G/B,\Ll)  \ar[l]_{j^*} \\
H^0(X(w)_P,\Ll_P) \ar[u]_{(\pi_P\lvert_{X(\dot{w})})^*} & H^0(G/P,\Ll_P) \ar[l]_{i^*}  \ar[u]_{\pi_P^*}} .
\end{equation}
By Proposition \ref{prop:jan-cohomology}, $(\pi_P\lvert_{X(\dot{w})})^*$ is an
isomorphism. Also, the choice of $w$ implies that $X(w)_P = G/P$, hence $i$
is the identity and $i^*$ is an isomorphism. By local triviality (cf. \cite[I
5.17]{Jan}), the map $\pi_P^*$ is an isomorphism. The diagram
commutes, thus $j^*$ is an isomorphism.
\end{proof}

The proposition and \cite[Theorem 6.2.1]{NPV} allows us to identify the support
variety of $H^0(X(w)_P, \Ll_P) $ in this special case. 
Choose $x \in W$ such that $x(\Phi_{\lambda,p}) = \Phi_I$ 
for some subset $I \subseteq \Delta$.

\begin{thm} \label{thm:w-is-w0J} With $J=J_\lambda$, $w=w_{0,J}$, and $P = P_{J}$ as above, 
\[ {\mathcal V}_{B_1}( H^0(X(w)_P, \Ll_P) ) = ( G\cdot \mathfrak{u}_I )
\cap \NNu .\] 
\end{thm}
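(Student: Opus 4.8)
The strategy is to combine Proposition~\ref{prop:w-is-w0J} with the known computation of the $G_1$-support variety of the full induced module $H^0(G/B,\Ll)$, and then to descend from $G_1$ to $B_1$ using the naturality/saturation machinery already set up in Sections 3 and 4. First I would invoke Proposition~\ref{prop:w-is-w0J} to replace $H^0(X(w)_P,\Ll_P)$ by $H^0(G/B,\Ll)$ as a $B$-module, so that the left-hand side becomes $\V_{B_1}(H^0(G/B,\Ll))$; since these are isomorphic $B$-modules, their $B_1$-support varieties coincide. Next, because $H^0(G/B,\Ll)$ is in fact a $G$-module, I would apply \cite[(1.2) Theorem]{FP} (already cited in the proof of Proposition~\ref{prop:regular-case}) to get $\V_{G_1}(H^0(G/B,\Ll)) = G\cdot \V_{B_1}(H^0(G/B,\Ll))$, and then \cite[Theorem 6.2.1]{NPV} to identify $\V_{G_1}(H^0(G/B,\Ll))$ with the $G$-saturation of $\mathfrak{u}_I$ intersected with the appropriate restricted nullcone — i.e.\ with $\overline{G\cdot \mathfrak{u}_I}$ (equivalently $G\cdot \mathfrak{u}_I$, as these closed varieties agree here) inside $\N(\gl)$, where $I$ is the simple-root subset conjugate to $\Phi_{\lambda,p}$ via $x$.

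With those two facts in hand, the remaining task is to recover $\V_{B_1}$ from its $G$-saturation. By Proposition~\ref{prop:naturality}, $\V_{B_1}(H^0(G/B,\Ll)) = \V_{B_1}\cap \V_{G_1}(H^0(G/B,\Ll)) = \NNu \cap (G\cdot \mathfrak{u}_I)$ — here I use the identification $\V_{B_1}\cong \NNu$ from Proposition~\ref{prop:VH1}, together with the fact that under the inclusion $\V_{B_1}\hookrightarrow \V_{G_1}$ the subvariety $\V_{B_1}$ is precisely $\N(\ul)$ sitting inside $\N(\gl)$. Substituting the description of $\V_{G_1}(H^0(G/B,\Ll))$ from the previous paragraph then yields exactly $(G\cdot \mathfrak{u}_I)\cap \NNu$, which is the desired right-hand side. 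I would close by remarking that the dependence on the choice of $x$ (hence on $I$ up to $W$-conjugacy) is harmless since $G\cdot\mathfrak{u}_I$ only depends on the $G$-conjugacy class of the parabolic $P_I$, which is determined by $W$-conjugacy of $I$.

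\textbf{Main obstacle.} The conceptually delicate point — and the one I would want to state carefully — is the identification $\V_{B_1} = \NNu$ as a \emph{subvariety} of $\V_{G_1} = \N(\gl)$, i.e.\ the compatibility of the homeomorphism of Proposition~\ref{prop:VH1} with the closed embedding $\operatorname{res}^*\colon \V_{B_1}\hookrightarrow \V_{G_1}$ coming from \cite[(5.4)]{SFB2}. This is what makes Proposition~\ref{prop:naturality} usable in the form $\V_{B_1}(M) = \NNu\cap \V_{G_1}(M)$; morally it is the statement that restriction of a $p$-nilpotent one-parameter subgroup of $G_1$ to $B_1$ corresponds to the inclusion $\N(\ul)\subseteq\N(\gl)$ of restricted nullcones, but one should make sure the scheme-theoretic/Frobenius-twist bookkeeping matches the conventions of Section 3. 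Everything else is a formal chaining of Proposition~\ref{prop:w-is-w0J}, \cite[(1.2) Theorem]{FP}, \cite[Theorem 6.2.1]{NPV}, and Proposition~\ref{prop:naturality}; no new geometric input about $B$-orbits is needed precisely because $H^0(X(w_{0,J})_P,\Ll_P)$ turns out to be a $G$-module.
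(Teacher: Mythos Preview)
Your proposal is correct and follows essentially the same route as the paper: invoke Proposition~\ref{prop:w-is-w0J} to identify the Demazure module with $H^0(G/B,\Ll)$, use \cite[Theorem 6.2.1]{NPV} to compute $\V_{G_1}(H^0(G/B,\Ll))=G\cdot\mathfrak{u}_I$, and then apply naturality (Proposition~\ref{prop:naturality}) to intersect with $\V_{B_1}=\NNu$. The appeal to \cite[(1.2) Theorem]{FP} in your plan is superfluous---once you have naturality, the saturation identity $\V_{G_1}=G\cdot\V_{B_1}$ plays no role in the argument---but it does no harm.
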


\begin{proof} By \cite[(6.2.1) Theorem]{NPV}, 
${\mathcal V}_{G_1}( H^0(G/B, \Ll) ) =G\cdot\mathfrak{u}_I$. The isomorphism of 
Proposition \ref{prop:w-is-w0J} along with naturality of supports, see (3.1.1), implies that 
 \begin{align*} {\mathcal
V}_{B_1}( H^0(X(w)_P, \Ll_P) ) & = {\mathcal V}_{B_1}( H^0(G/B, \Ll) ) \\ & =
{\mathcal V}_{G_1}( H^0(G/B, \Ll) ) \cap \NNu \\ & = ( G\cdot \mathfrak{u}_I ) \cap
\NNu . \end{align*} 
\end{proof}

Theorem~\ref{thm:w-is-w0J} implies that the $B_1$ support varieties of
certain Demazure modules are unions of the closures of orbital varieties. Recall from the introduction 
that the $B_1$ support varieties of induced modules $H^0(G/B, \Ll)$ 
are also unions of orbital variety closures. It remains an interesting open problem 
whether or not the support varieties of all Demazure modules are unions of orbital variety 
closures and whether one can realize all such closures as support varieties of certain
modules.

\subsection{Longest element in $w_{I}$} In this section, let $I\subseteq \Delta$ be an arbitrary subset and let 
$w=w_{I}\in W_{I}$ such that $w_{I}(\alpha)<0$ for all $\alpha\in I$. The
element $w_{I}$ is the long element in the group $W_{I}$. First, note that in
this case by \cite[II 13.3 (4)]{Jan} 
\begin{equation*}
H^{0}(X(w),{\mathcal L}(\lambda))\cong \text{ind}_{B}^{P_{I}}\lambda .
\end{equation*} 
Consequently, $H^{0}(X(w),{\mathcal L}(\lambda))$ is a
$P_{I}$-module with $U_{I}$ acting trivially. The following theorem describes
the support variety of $H^{0}(X(w),{\mathcal L}(\lambda))$ as a
$(P_{I})_{1}$-module by reducing down to case of \cite[Theorem 6.2.1]{NPV} for
the Levi subgroup $L_{I}$.

\begin{thm} Let $I\subseteq \Delta$ with ${\mathfrak u}_{I}=\operatorname{Lie
}U_{I}$, and $w=w_{I}$. Then 
$$\V_{(P_{I})_{1}}(H^{0}(X(w),\Ll))=
[\V_{(L_{I})_{1}}(H_{I}^{0}(\lambda))+{\mathfrak u}_{I}] \cap {\mathcal N}_{1}({\mathfrak p}_{I}).$$ 
\end{thm}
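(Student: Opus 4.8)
The strategy is to combine the parabolic version of the Friedlander--Parshall/Bendel result (Theorem~\ref{thm:fp-bend-generalization}), applied to the Levi subgroup $L_I$ rather than $G$, with the fact that $H^0(X(w),\Ll)\cong\ind_B^{P_I}\lambda$ has $U_I$ acting trivially, so its restriction to $(P_I)_1$ is obtained by inflation along the projection $(P_I)_1\to (L_I)_1$. First I would record the isomorphism $H^0(X(w),\Ll)\cong\ind_B^{P_I}\lambda=H^0_I(\lambda)$ from \cite[II 13.3(4)]{Jan}, viewing $H^0_I(\lambda)$ here both as an $L_I$-module (where $\V_{(L_I)_1}(H^0_I(\lambda))$ is computed by \cite[Theorem 6.2.1]{NPV}) and, by inflation, as a $P_I$-module with $U_I$ acting trivially. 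Next I would use Proposition~\ref{prop:VH1} to identify $\V_{(P_I)_1}$ with $\mathcal{N}_1(\mathfrak{p}_I)$ and $\V_{(L_I)_1}$ with $\mathcal{N}_1(\mathfrak{l}_I)$, and similarly for $\V_{(B_I)_1}$ where $B_I=B\cap L_I$ is the Borel of $L_I$.

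The core computation is the case of the Borel $B_1$, i.e.\ showing
\[
\V_{B_1}\bigl(H^0(X(w),\Ll)\bigr)=\bigl[\V_{(B_I)_1}(H^0_I(\lambda))+\mathfrak{u}_I\bigr]\cap\mathcal{N}_1(\mathfrak{b}).
\]
Here I would use the rank-variety description of Proposition~\ref{prop:VH1M}: a point $x\in\mathcal{N}_1(\mathfrak{b})$ lies in the support variety iff $H^0_I(\lambda)$ is not $x$-projective. Write $x=x_l+x_u$ according to the decomposition $\mathfrak{b}=\mathfrak{b}_I\oplus\mathfrak{u}_I$ (as $\mathfrak u_I$ is an ideal of $\mathfrak b$). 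Since $U_I$ acts trivially on $H^0_I(\lambda)$, the action of the one-parameter subgroup attached to $x$ on $H^0_I(\lambda)$ factors through the component $x_l$; restriction to the subalgebra generated by $x$ sees only $x_l$. Thus $H^0_I(\lambda)$ is $x$-projective iff it is $x_l$-projective iff $x_l\notin\V_{(B_I)_1}(H^0_I(\lambda))$ (using $x_l\in\mathcal N_1(\mathfrak b_I)$, which is automatic once $x^{[p]}=0$ and the grading). This gives exactly the set $\{x_l+x_u : x_l\in\V_{(B_I)_1}(H^0_I(\lambda)),\ x_u\in\mathfrak u_I\}\cap\mathcal N_1(\mathfrak b)$, i.e.\ the right-hand side. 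One must check the sum $\V_{(B_I)_1}(H^0_I(\lambda))+\mathfrak u_I$ is genuinely a set-sum of varieties and intersecting with $\mathcal N_1(\mathfrak b)$ is needed because not every element of that sum is $[p]$-nilpotent.

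Finally I would pass from $B_1$ to $(P_I)_1$ by invoking Theorem~\ref{thm:fp-bend-generalization} with the parabolic taken inside $L_I$'s ambient group: more directly, $\V_{(P_I)_1}(M)=P_I\cdot\V_{B_1}(M)$ for $M\in\mod(P_I)$. Applying $P_I=L_IU_I$ to the $B_1$-answer, and noting that $\V_{(L_I)_1}(H^0_I(\lambda))=L_I\cdot\V_{(B_I)_1}(H^0_I(\lambda))$ again by Theorem~\ref{thm:fp-bend-generalization} (for $L_I$ and its Borel $B_I$), while $U_I$ acts on $\mathfrak p_I$ fixing $\mathfrak u_I$ and translating the Levi part, one obtains
\[
\V_{(P_I)_1}(H^0(X(w),\Ll))=\bigl[\V_{(L_I)_1}(H^0_I(\lambda))+\mathfrak u_I\bigr]\cap\mathcal N_1(\mathfrak p_I),
\]
as claimed. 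The main obstacle I anticipate is the bookkeeping in the rank-variety step: verifying carefully that $x$-projectivity of an inflated module depends only on the Levi component $x_l$ of $x$ (this uses that $\mathfrak u_I$ is a restricted ideal acting trivially, so the restricted subalgebra generated by $x$ maps onto the one generated by $x_l$ compatibly with the module structure), and confirming that the $P_I$-saturation of the $B_1$-answer does not overshoot the intersection with $\mathcal N_1(\mathfrak p_I)$. Everything else is a formal consequence of the results already established in Sections 2--4.
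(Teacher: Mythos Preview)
Your core idea---use the rank-variety description together with the fact that $\mathfrak{u}_I$ acts trivially on $H^0_I(\lambda)$, so $x$-projectivity depends only on the Levi component of $x$---is exactly the argument the paper uses. However, the paper applies this rank-variety argument \emph{directly at the $(P_I)_1$ level}, not at $B_1$ followed by saturation. Concretely, the paper takes $z=x+y\in\mathcal{N}_1(\mathfrak{p}_I)$ with $x\in\mathfrak{l}_I$, $y\in\mathfrak{u}_I$, invokes \cite[Proposition~5.2(a)]{CLNP} to get $x\in\mathcal{N}_1(\mathfrak{l}_I)$, observes $z\cdot H^0_I(\lambda)=x\cdot H^0_I(\lambda)$ since $\mathfrak{u}_I$ acts trivially, and concludes immediately that $z\in\V_{(P_I)_1}(H^0(X(w),\Ll))$ iff $x\in\V_{(L_I)_1}(H^0_I(\lambda))$. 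No appeal to Theorem~\ref{thm:fp-bend-generalization} is needed.

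Your detour through $B_1$ is correct but costs you the very bookkeeping you flag as the ``main obstacle'': verifying that $P_I\cdot\bigl([\V_{(B_I)_1}(H^0_I(\lambda))+\mathfrak{u}_I]\cap\mathcal{N}_1(\mathfrak{b})\bigr)$ equals $[\V_{(L_I)_1}(H^0_I(\lambda))+\mathfrak{u}_I]\cap\mathcal{N}_1(\mathfrak{p}_I)$ requires decomposing $P_I=L_IU_I$, tracking how $U_I$ moves Levi elements modulo $\mathfrak{u}_I$, and checking both inclusions. All of this is avoidable, since Proposition~\ref{prop:VH1M} already identifies $\V_{(P_I)_1}(M)$ as a rank variety inside $\mathcal{N}_1(\mathfrak{p}_I)$. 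The paper's route is therefore shorter and sidesteps the saturation step entirely; your route works but does more than necessary.
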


\begin{proof} 
Set ${\mathfrak l}_{I}=\text{Lie }L_{I}$ and ${\mathfrak
u}_{I}=\operatorname{Lie }U_{I}$. First observe by \cite[(4.2) Examples]{CPS}
that 
\begin{equation*} \text{ind}_{B}^{P_{I}}\lambda|_{L_{I}}\cong
\text{ind}_{L_{I}\cap B}^{L_{I}}\lambda:=H^{0}_{I}(\lambda). 
\end{equation*}
Let $z=x+y$ where $x\in {\mathfrak l}_{I}$, $y\in {\mathfrak u}_{I}$ and $z\in
{\mathcal N}_{1}({\mathfrak p}_{I})$. Then by \cite[Proposition 5.2(a)]{CLNP}, 
$x\in {\mathcal N}_{1}({\mathfrak l}_{I})$. Since ${\mathfrak u}_{I}$ acts trivially 
on $H^{0}_{I}(\lambda)$ we have 
\begin{equation*}\label{E:compareim}
z.H^{0}_{I}(\lambda)=x.H^{0}_{I}(\lambda). 
\end{equation*} 
In particular, $H^0_I(\lambda)$ is $z$-projective  if and only if
it is $x$-projective. By
the realization of the support varieties in terms of rank varieties, we can
conclude that $z\in {\mathcal V}_{(P_{I})_{1}}(H^{0}(X(w),{\mathcal
L}(\lambda)))$ if and only if $x\in {\mathcal
V}_{(L_{I})_{1}}(H^{0}(X(w),{\mathcal L}(\lambda)))$.

Therefore, 
\begin{equation*} {\mathcal V}_{(P_{I})_{1}}(H^{0}(X(w),{\mathcal
L}(\lambda))) = [{\mathcal
V}_{(L_{I})_{1}}(H_{I}^{0}(\lambda))+{\mathfrak u}_{I}] \cap
{\mathcal N}_{1}({\mathfrak p}_{I}). 
\end{equation*} 
\end{proof}

Using \cite[Theorem 6.2.1]{NPV} we obtain the following description of the
support variety. Recall that when the prime $p$ is good there exists $x \in W$ such that $x(\Phi_{\lambda,p}) =
\Phi_J$ for some subset $J \subseteq \Delta$. 

\begin{cor} \label{cor:w-is-wI}
Let $w = w_I$ as above, let $\lambda \in X(T)_+$, and suppose $p$ is a good prime for $\Phi$. Let $x \in W_I$ be such that $x( (\Phi_I)_{\lambda,p} ) = (\Phi_I)_J$ for some
subset $J \subset I$. Let $\mathfrak{u}_{I,J}$ be the nilradical of the
parabolic in $\mathfrak{l}_I$ corresponding to $J$. Then, 
\begin{eqnarray*}
\V_{(P_{I})_{1}}(H^{0}(X(w),\Ll)) & = & (L_I \cdot \mathfrak{u}_{I,J} + {\mathfrak u}_{I})\cap 
{\mathcal N}_{1}({\mathfrak p}_{I}) \\
 & = & \left( L_I \cdot (\mathfrak{u}_{I,J} + {\mathfrak u}_{I}) \right) \cap {\mathcal N}_{1}({\mathfrak p}_{I}) \\
 & = & \left( L_I \cdot \mathfrak{u}_J \right) \cap {\mathcal N}_{1}({\mathfrak p}_{I}). 
\end{eqnarray*}
\end{cor}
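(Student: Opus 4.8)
The plan is to combine the previous theorem with the Nakano--Parshall--Vella computation \cite[Theorem 6.2.1]{NPV} for the Levi factor $L_I$, and then to simplify the resulting set-theoretic expression. First I would apply the preceding theorem to rewrite
\[
\V_{(P_I)_1}(H^0(X(w),\Ll)) = \bigl[\V_{(L_I)_1}(H^0_I(\lambda)) + \mathfrak{u}_I\bigr] \cap \mathcal{N}_1(\mathfrak{p}_I).
\]
Next, since $H^0_I(\lambda) = H^0(L_I/(L_I\cap B), \mathcal{L}(\lambda))$ is the induced module for the reductive group $L_I$, I would invoke \cite[Theorem 6.2.1]{NPV} \emph{for $L_I$} to identify $\V_{(L_I)_1}(H^0_I(\lambda)) = L_I \cdot \mathfrak{u}_{I,J}$, where $J\subseteq I$ is chosen with $x((\Phi_I)_{\lambda,p}) = (\Phi_I)_J$ for some $x\in W_I$; here one must check that $p$ being good for $\Phi$ forces $p$ to be good for the sub-root system $\Phi_I$, and that the relevant weight restriction $(\lambda|_{L_I})$ has the linkage class $(\Phi_I)_{\lambda,p}$, which is immediate from the definitions since $(\lambda+\rho_I,\alpha^\vee) = (\lambda+\rho,\alpha^\vee)$ for $\alpha\in I$ (the extra $\rho$-contribution from roots outside $I$ pairs to zero against coroots of roots inside $I$ — or more carefully, the discrepancy is irrelevant mod $p$ by the standard argument). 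This yields the first displayed equality in the corollary.

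For the second equality I would argue that $L_I$ normalizes $\mathfrak{u}_I$ and acts on it, so $L_I\cdot\mathfrak{u}_{I,J} + \mathfrak{u}_I = L_I\cdot(\mathfrak{u}_{I,J}+\mathfrak{u}_I)$ as sets; this uses that for $g\in L_I$ one has $g\cdot(\mathfrak{u}_{I,J}+\mathfrak{u}_I) = g\cdot\mathfrak{u}_{I,J} + g\cdot\mathfrak{u}_I = g\cdot\mathfrak{u}_{I,J} + \mathfrak{u}_I$, together with the fact that every element of the left-hand set, $v + u$ with $v\in g\cdot\mathfrak{u}_{I,J}$ and $u\in\mathfrak{u}_I$, can be written as $g\cdot(v' + g^{-1}u)$ with $v'\in\mathfrak{u}_{I,J}$ and $g^{-1}u\in\mathfrak{u}_I$. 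The third equality then follows by identifying $\mathfrak{u}_{I,J} + \mathfrak{u}_I$ with $\mathfrak{u}_J$: by the description of Levi decompositions in Section 2, $\mathfrak{u}_J = \operatorname{Lie}U_J$ is spanned by the negative root spaces $\mathfrak{g}_{-\alpha}$ for $\alpha\in\Phi_+\setminus\Phi_J$, while $\mathfrak{u}_I$ is spanned by $\mathfrak{g}_{-\alpha}$ for $\alpha\in\Phi_+\setminus\Phi_I$ and $\mathfrak{u}_{I,J}$ (the nilradical of the parabolic of $\mathfrak{l}_I$ attached to $J$) is spanned by $\mathfrak{g}_{-\alpha}$ for $\alpha\in\Phi_I^+\setminus\Phi_J^+$; since $J\subseteq I$ these two index sets partition $\Phi_+\setminus\Phi_J$, giving $\mathfrak{u}_{I,J}\oplus\mathfrak{u}_I = \mathfrak{u}_J$ as vector spaces, hence $L_I\cdot(\mathfrak{u}_{I,J}+\mathfrak{u}_I) = L_I\cdot\mathfrak{u}_J$.

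The main obstacle I anticipate is the bookkeeping around root subsystems and linkage classes: verifying that goodness of $p$ for $\Phi$ transfers to $\Phi_I$, that the linkage data for the $L_I$-module $\lambda|_{L_I}$ really is governed by $(\Phi_I)_{\lambda,p}$, and that $x\in W_I$ can be chosen (rather than merely $x\in W$) to conjugate $(\Phi_I)_{\lambda,p}$ into $(\Phi_I)_J$ — the latter because $W_I$ is the Weyl group of $\Phi_I$ and $(\Phi_I)_{\lambda,p}$ is a subsystem of $\Phi_I$, so the cited result \cite[Prop. 24, pg. 165]{Bo} applies internally to $\Phi_I$. The decomposition-of-root-spaces argument for $\mathfrak{u}_{I,J}\oplus\mathfrak{u}_I = \mathfrak{u}_J$ is routine once the index sets are written out, and the $L_I$-equivariance manipulation is elementary; I would present both briefly and spend most of the care on the subsystem/linkage reductions.
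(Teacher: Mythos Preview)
Your proposal is correct and follows exactly the route the paper intends: the corollary is stated without a detailed proof, merely as a consequence of applying \cite[Theorem 6.2.1]{NPV} to the Levi factor $L_I$ inside the preceding theorem, and your set-theoretic simplifications (via $L_I$-normalization of $\mathfrak{u}_I$ and the root-space decomposition $\mathfrak{u}_{I,J}\oplus\mathfrak{u}_I=\mathfrak{u}_J$) are the natural ones. Your hedging about the $\rho$ versus $\rho_I$ issue is unnecessary---your first instinct is right, since $(\rho-\rho_I,\alpha^\vee)=0$ for every simple $\alpha\in I$ and hence for every $\beta\in\Phi_I$.
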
 

\subsection{Parabolic Upper and Lower Bounds} 

The explicit calculation of Corollary \ref{cor:w-is-wI} and the inclusions among saturated support varieties in Theorem \ref{thm:saturation} give upper and lower bounds on the saturation $G \cdot \V_{B_1}(\hwl)$ for arbitrary $w \in W$ and $\lambda \in X(T)_+$. To state the bounds obtained we introduce some notation. For $v \in W$, let 
$v = s_{\gamma_1} \cdots s_{\gamma_n}$ be a reduced expression. Define the support of $v$ by $S(v) = \{ \gamma_1, \ldots, 
\gamma_n \}$. This definition is independent of the reduced expression chosen (cf. \cite[Theorem 3.3.1]{Bj}). As in the previous section, $w_I$ denotes the long element of $W_I$ for a subset $I \subseteq \Delta$.

\begin{lem}
\label{lem:w-bounds}
If $v \in W$ then $v \le w_{S(v)}$. Moreover, $v \le w_I$ implies that $S(v) \subseteq I$ and $w_{S(v)} \le w_I$.
\end{lem}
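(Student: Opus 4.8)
The plan is to establish the three assertions of Lemma~\ref{lem:w-bounds} in sequence, each reducing to a standard fact about the Bruhat order and its interaction with parabolic subgroups, using the support function $S(v)$ as the organizing device. For the first claim, $v \le w_{S(v)}$, the key observation is that if $v = s_{\gamma_1}\cdots s_{\gamma_n}$ is a reduced expression, then $v \in W_{S(v)}$ by definition, since all the simple reflections occurring lie in $S(v)$. Within the parabolic subgroup $W_I$ (here $I = S(v)$), the element $w_I$ is the \emph{unique maximal element} in the Bruhat order restricted to $W_I$ (cf.\ \cite[Ch.~14]{Jan} or standard Coxeter group theory), so $v \le w_{S(v)}$ holds in $W_I$. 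Since the Bruhat order on $W_I$ is the restriction of the Bruhat order on $W$ (a well-known compatibility, e.g.\ via the subword characterization), we conclude $v \le w_{S(v)}$ in $W$.

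For the second claim, suppose $v \le w_I$. I would use the subword property of the Bruhat order: fix a reduced expression for $w_I$ using only simple reflections $s_\alpha$ with $\alpha \in I$ (such an expression exists since $w_I \in W_I$). Then $v \le w_I$ means some subexpression of this reduced word is a reduced expression for $v$; all simple reflections in that subword lie in $\{s_\alpha : \alpha \in I\}$, so $S(v) \subseteq I$. The inclusion $w_{S(v)} \le w_I$ then follows either directly from the first part applied with the containment $S(v) \subseteq I$ — namely, $w_{S(v)} \in W_{S(v)} \subseteq W_I$, and again $w_I$ is maximal in $W_I$ — or by noting that $w_{S(v)} = w_{J}$ for $J = S(v)$ is the longest element of a parabolic subgroup $W_J$ of $W_I$, hence bounded above by the longest element $w_I$ of $W_I$.

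The main (and only mildly delicate) point to pin down carefully is the compatibility of the Bruhat orders: that for a subset $I \subseteq \Delta$ and $u, v \in W_I$, one has $u \le v$ in $W$ if and only if $u \le v$ in $W_I$. This is classical — it follows from the subword characterization of Bruhat order together with the fact that any reduced expression for an element of $W_I$ uses only generators from $I$ — but it is the load-bearing ingredient, and I would cite \cite[Theorem 3.3.1]{Bj} or the analogous result in a standard reference on Coxeter groups rather than reprove it. Everything else is bookkeeping: unwinding the definition of $S(v)$, invoking the well-definedness already noted in the statement (independence of reduced expression, \cite[Theorem 3.3.1]{Bj}), and invoking maximality of $w_I$ in $W_I$. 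I expect no genuine obstacle here; the lemma is purely combinatorial and its role in the paper is to translate the Bruhat-order hypotheses of Theorem~\ref{thm:saturation} into statements about supports $S(v)$ so that Corollary~\ref{cor:w-is-wI} can be applied to sandwich $G \cdot \V_{B_1}(\hwl)$.
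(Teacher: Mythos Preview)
Your proposal is correct and follows essentially the same approach as the paper. The paper's proof is terser---it cites \cite[Theorem 5.10]{Hum} (the subword characterization of Bruhat order) and then argues, as you do, that $v \in W_{S(v)}$ gives $v \le w_{S(v)}$ by maximality of the longest element, and that $v \le w_I$ forces the generators of $W_{S(v)}$ into $W_I$, whence $w_{S(v)} \le w_I$; your version simply spells out more carefully the compatibility of the Bruhat orders on $W_I$ and $W$.
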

\begin{proof}
This is a consequence of \cite[Theorem 5.10]{Hum}.
The expression $v = s_{\gamma_1} \cdots s_{\gamma_n}$ implies that $v \in W_{S(v)}$ and hence $v \le w_{S(v)}$ since the latter is the unique longest element of 
$W_{S(v)}$. Similarly, $v \le w_I$ implies that the generators of $W_{S(v)}$ are contained in $W_I$ hence $w_{S(v)} \le w_I$.
\end{proof}

The lemma gives us a precise characterization of the least upper bound by 
elements of the form $w_I$ where $I \subseteq \Delta$. 
In general there is no unique greatest lower bound as Example \ref{ex:no-unique-lower-bound} shows.

\begin{example}
\label{ex:no-unique-lower-bound}
Let $W$ be the Weyl group of type $A_3$ generated by simple reflections $s_{\alpha_1}, s_{\alpha_2}, s_{\alpha_3}$ 
such that $s_{\alpha_1}$ and $s_{\alpha_{3}}$ commute. 
\begin{itemize}
\item The element $w = s_{\alpha_{1}}s_{\alpha_{2}}$ has support $S(s_{\alpha_{1}}s_{\alpha_{2}}) = 
\{s_{\alpha_{1}},s_{\alpha_{2}}\}$ and its unique parabolic upper bound in the Bruhat order is 
$w_{\{\alpha_1,\alpha_2\}} = s_{\alpha_{1}}s_{\alpha_{2}}s_{\alpha_{1}}$. On the other hand, $w$ has 
maximal lower bounds 
$w_{\{\alpha_1\}} = s_{\alpha_{1}}$ and $w_{\{\alpha_2\}} = s_{\alpha_{2}}$ which are incomparable.   
\item The element $w = s_{\alpha_{1}}s_{\alpha_{2}}s_{\alpha_{3}}$ has support 
$S(s_{\alpha_{1}}s_{\alpha_{2}}s_{\alpha_{3}}) = \Delta$ so its unique upper bound is 
$w_0 = s_{\alpha_1}s_{\alpha_2}s_{\alpha_3}s_{\alpha_1}s_{\alpha_2}s_{\alpha_3}$. 
Moreover, $w$ has a unique maximal lower bound given by $w_{\{\alpha_1,\alpha_3\}} = 
s_{\alpha_1}s_{\alpha_3}$. The set of all 
parabolic elements bounded above by $w$ is
$\{ e, s_{\alpha_{1}}, s_{\alpha_{2}}, s_{\alpha_{3}}, s_{\alpha_{1}}s_{\alpha_{3}} \}$.
\end{itemize}
\end{example}

As an application, the explicit description of supports given by Corollary \ref{cor:w-is-wI} implies the 
following explicit upper and lower bounds on the $G$-saturated support variety of a Demazure module.

\begin{prop}
\label{prop:support-lower-bound}
Let $v \in W$ and $\lambda \in X(T)_+$ then,

\begin{equation*}
  G \cdot \V_{B_1}(H^0(w_{S(v)}, \lambda ) )  \subseteq  G \cdot \V_{B_1}(H^0(v, \lambda ) ) \subseteq \bigcap\limits_{w_I \le v}  G \cdot \V_{B_1}(H^0(w_I,\lambda))
\end{equation*}
where the intersection may be taken over the set of $w_I \le v$ which are 
maximal with respect to that property.
\end{prop}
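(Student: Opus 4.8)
The plan is to obtain both containments as direct consequences of the $G$-saturation result, Theorem~\ref{thm:saturation}, combined with the combinatorial facts collected in Lemma~\ref{lem:w-bounds}; the only extra work is to justify that the indexing set of the intersection on the right can be shrunk to the maximal parabolic elements below $v$, and this too will follow from a further application of Theorem~\ref{thm:saturation}.

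For the lower bound, I would first invoke Lemma~\ref{lem:w-bounds} to get $v \le w_{S(v)}$ in the Bruhat order. If $v = w_{S(v)}$ the left-hand containment is an equality and there is nothing to prove; otherwise $v < w_{S(v)}$, and Theorem~\ref{thm:saturation}, applied with $w_1 = v$ and $w_2 = w_{S(v)}$, yields $G \cdot \V_{B_1}(H^0(w_{S(v)},\lambda)) \subseteq G \cdot \V_{B_1}(H^0(v,\lambda))$, as required.

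For the upper bound, fix any $I \subseteq \Delta$ with $w_I \le v$. Theorem~\ref{thm:saturation} (with $w_1 = w_I$, $w_2 = v$, again reading the case $v = w_I$ as an equality) gives $G \cdot \V_{B_1}(H^0(v,\lambda)) \subseteq G \cdot \V_{B_1}(H^0(w_I,\lambda))$, and intersecting over all such $I$ produces the claimed bound. To see that it is enough to intersect over those $w_I \le v$ which are maximal with this property, I would argue as follows: the collection $\{ w_I : I \subseteq \Delta,\ w_I \le v \}$ is finite, so each of its elements sits below some maximal element; and whenever $w_I \le w_{I'} \le v$, a third application of Theorem~\ref{thm:saturation} gives $G \cdot \V_{B_1}(H^0(w_{I'},\lambda)) \subseteq G \cdot \V_{B_1}(H^0(w_I,\lambda))$, so the term indexed by $w_I$ is redundant in the intersection. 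Hence the intersection over all $w_I \le v$ agrees with the intersection over the maximal ones.

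I do not expect a genuine obstacle here: the statement is essentially a formal bookkeeping exercise around Theorem~\ref{thm:saturation}. The two points that require a little care are the degenerate cases $v = w_{S(v)}$ and $v = w_I$, where Theorem~\ref{thm:saturation} is replaced by a trivial equality, and the reduction to maximal indices just discussed. Finally, to emphasize that these bounds are \emph{effective}, I would note that $H^0(X(w_I),\Ll) \cong \ind_B^{P_I}\lambda$ and that $G \cdot \V_{B_1}(M) = G \cdot \V_{(P_I)_1}(M)$ by Theorem~\ref{thm:fp-bend-generalization}, so that each term $G \cdot \V_{B_1}(H^0(w_I,\lambda))$, and likewise the lower bound $G \cdot \V_{B_1}(H^0(w_{S(v)},\lambda))$, is computed explicitly by the formula in Corollary~\ref{cor:w-is-wI}.
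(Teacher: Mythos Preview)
Your proposal is correct and follows exactly the route the paper intends: the proposition is stated there without a written proof, as an immediate application of Theorem~\ref{thm:saturation} together with Lemma~\ref{lem:w-bounds}, with Corollary~\ref{cor:w-is-wI} invoked only to remark that the bounding varieties are explicitly known. Your argument simply spells out these implicit steps, including the reduction to maximal $w_I$, and there is nothing to add.
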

Recall that the varieties of the form $\V_{B_1}(H^0(w_I,\lambda))$ are explicitly determined in Corollary \ref{cor:w-is-wI}.

\section{Support varieties of Demazure modules for the root system $A_2$}
\label{sec:A2}

In this section we present explicit calculations of the support
varieties for Demazure modules when the group $G$ has a root system 
of type $A_2$. We proceed by applying our results from Section
\ref{sec:calc} in the case when the prime $p$ is good. For type $A_2$
this means that $p \ge 3$. We return to the case when $p=2$ in
Subsection \ref{subsec:A2-p2}.

\subsection{(Type $A_2$, $p \ge 3$)}
\label{subsec:A2}

Let $G = SL(3)$ with $p\geq 3$, and $\lambda = (\lambda_1, \lambda_2)$
be a dominant integral weight expressed in terms of the fundamental
weights.  Let us identify $\Delta = \{ \alpha, \beta \}$ and
$W = \{ e, s_\alpha, s_\beta, s_\alpha s_\beta, s_\beta s_\alpha,
s_\alpha s_\beta s_\alpha \}$. The cases where $\ell(w) \ne 2$,
i.e.,  $w \in \{ e, s_\alpha, s_\beta, s_\alpha s_\beta s_\alpha \}$,
are covered by Corollary \ref{cor:w-is-wI}. For such a $w$, set $\V =
\V_{B_1}( H^0(w, \lambda ) )$. We summarize in Table \ref{tab:A2}  below.

\begin{table}[ht]
	\centering
	\begin{tabular}{c|c|l} 
		$w$ &  $\V_{B_1}(H^0(w,\lambda )$ & $\lambda$ \\[2pt]
		\hline 
		$e$ & ${\mathfrak u}$ & \text{all} $\lambda$ \\[2pt] 
		$s_\alpha$ & ${\mathfrak u}_\alpha$ & 
		$p \mid \lambda_1 + 1$ \\[2pt]
		& ${\mathfrak u}$ & $p \nmid \lambda_1 + 1$ \\[2pt]
		$s_\beta$ & ${\mathfrak u}_\beta$ & $p \mid \lambda_2 + 1$ \\[2pt]
		& ${\mathfrak u}$ & $p \nmid \lambda_2 + 1$ \\[2pt]
		$s_\alpha s_\beta s_\alpha$ & $\V_{G_1}(H^0(\lambda)) \cap {\mathfrak u}$ & 
		all $\lambda$ \\[2pt]
	\end{tabular}
	\vspace{0.2cm}
	\caption{$B_1$-support varieties for $A_2$ when $\ell(w) \ne 2$, $p\geq 3$}
	\label{tab:A2}
\end{table} 
In the $w = s_\alpha s_\beta s_\alpha$ case, $J \subset \Delta$
depends on $\lambda$ and $p$ as in the discussion before Corollary
\ref{cor:w-is-wI}.

For the cases where $\ell(w) = 2$, we analyze the regularity of $\lambda$ with respect to the prime 
$p$ and $p$-divisibility of the dimension of $\hwl$. We treat the case $w = s_\alpha s_\beta$, the other case 
being symmetric upon switching $\alpha$, $\beta$ and $\lambda_1, \lambda_2$. For 
convenience, denote by $M(\lambda) = M(\lambda_1, \lambda_2)$ the $B$-module
$H^0(s_\alpha s_\beta, \lambda)$ which we also identify with 
$\ind_B^{P_\alpha} \ind_B^{P_\beta} \lambda$.

In the root system of type $A_2$, a weight $\lambda$ is $p$-regular if
and only if
\begin{equation*}
\label{eq:A2-regularity}
\tag{A}
\left\{ \begin{array}{l}
	p \nmid \lambda_1 + 1, \\
	p \nmid \lambda_2 + 1, \\
	p \nmid \lambda_1 + \lambda_2 + 2 .\end{array} \right. 
\end{equation*}

We may apply the Demazure character formula (\cite{A}) in this situation to conclude that $\dim M(\lambda) = \frac{(\lambda_2+1)(2 \lambda_1 + \lambda_2 + 2)}{2}$. Thus $p$ does not divide $\dim M(\lambda)$ if and only if
\begin{equation*}
\label{eq:A2-dimension}
\tag{B}
\left\{
\begin{array}{l}
p \nmid \lambda_2+1, \\
p \nmid 2 \lambda_1 + \lambda_2 + 2 .
\end{array} \right. 
\end{equation*}

\begin{thm}
\label{thm:A2}
Let $p\geq 3$. The $B_1$-support variety $\V = \V_{B_1}(M(\lambda))$ is 
${\mathfrak u}$ if either \eqref{eq:A2-regularity} or \eqref{eq:A2-dimension} hold. Otherwise $\V$ is a proper subvariety of ${\mathfrak u}$ given by the conditions below:
\begin{equation}
\label{eq:A2-length2-calculation}
\V_{B_1}(M(\lambda)) = \left\{
\begin{array}{ll}
{\mathfrak u}_\alpha, & \text{if}\,\, \lambda = (np - 1, 0) \quad (n \ge 1), \\[3pt]
{\mathfrak u}_\alpha \cup {\mathfrak u}_\beta, & \text{if}\,\, \lambda_2 \ne 0 \,\text{and neither 
\eqref{eq:A2-regularity} nor \eqref{eq:A2-dimension} hold.} \\[3pt]
\end{array} \right. 
\end{equation}
\end{thm}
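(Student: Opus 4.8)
The plan is to split the analysis of $\V_{B_1}(M(\lambda))$ for $\lambda = (\lambda_1,\lambda_2)$ into the two regimes appearing in the statement and to bound $\V$ above and below using the tools already developed. First I would establish the ``generic'' case: when either \eqref{eq:A2-regularity} or \eqref{eq:A2-dimension} holds, I would show $\V = {\mathfrak u}$. If \eqref{eq:A2-regularity} holds this is immediate from Proposition \ref{prop:regular-case}. If instead \eqref{eq:A2-dimension} holds, then $p\nmid\dim M(\lambda)$; I would use the standard fact (e.g. via \cite[Proposition 3.1.4]{NPV} or the characterization of projectivity over $B_1$ in terms of dimension modulo $p$, since $B_1$ acts and $\dim B_1 = $ a power of $p$) to conclude that $M(\lambda)$ is \emph{not} projective over $B_1$ in a strong enough sense---more precisely, I would argue that a $p$-indivisible dimension forces the support variety to be as large as possible because $M(\lambda)$ restricted to the $U_1$ for a generic root subgroup is non-projective, and then use that $\V_{B_1}(M(\lambda))$ is a closed conical $B$-stable subvariety of ${\mathfrak u}$ containing a regular nilpotent element (arguing as in the last paragraph of the proof of Proposition \ref{prop:regular-case}) to get $\V = {\mathfrak u}$.

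Next I would handle the degenerate case, where \emph{neither} \eqref{eq:A2-regularity} nor \eqref{eq:A2-dimension} holds. Writing out the negations, failure of \eqref{eq:A2-dimension} means $p\mid \lambda_2+1$ or $p\mid 2\lambda_1+\lambda_2+2$, and failure of \eqref{eq:A2-regularity} adds $p\mid\lambda_1+1$, $p\mid\lambda_2+1$, or $p\mid\lambda_1+\lambda_2+2$ into play. I would organize these into the two sub-cases of \eqref{eq:A2-length2-calculation}. For the sub-case $\lambda_2\neq 0$: I claim $\V = {\mathfrak u}_\alpha\cup{\mathfrak u}_\beta$. The upper bound $\V_{B_1}(M(\lambda))\subseteq {\mathfrak u}_\alpha\cup{\mathfrak u}_\beta$ should come from the parabolic bounds of Proposition \ref{prop:support-lower-bound} applied to $v = s_\alpha s_\beta$: the maximal parabolic elements below $v$ are $w_{\{\alpha\}}=s_\alpha$ and $w_{\{\beta\}}=s_\beta$, so $G\cdot\V_{B_1}(M(\lambda)) \subseteq G\cdot\V_{B_1}(H^0(s_\alpha,\lambda)) \cap G\cdot\V_{B_1}(H^0(s_\beta,\lambda))$; using Corollary \ref{cor:w-is-wI} and the entries of Table \ref{tab:A2}, and then intersecting with ${\mathcal N}_1({\mathfrak u})$ via naturality (3.1.1), one reads off that $\V$ is contained in ${\mathfrak u}_\alpha\cup{\mathfrak u}_\beta$ (under the failure of regularity, at least one of $\V_{B_1}(H^0(s_\alpha,\lambda))$, $\V_{B_1}(H^0(s_\beta,\lambda))$ is a proper subvariety, and a short case-check on the divisibility hypotheses forces both to be proper, yielding exactly ${\mathfrak u}_\alpha\cup{\mathfrak u}_\beta$). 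For the reverse inclusion I would use the lower bound $G\cdot\V_{B_1}(H^0(w_{S(v)},\lambda))\subseteq G\cdot\V_{B_1}(H^0(v,\lambda))$ from Proposition \ref{prop:support-lower-bound} together with a direct restriction argument to $(P_\alpha)_1$ and $(P_\beta)_1$: using $M(\lambda)=\ind_B^{P_\alpha}\ind_B^{P_\beta}\lambda$ and Theorem \ref{thm:npv-541}, I would show both ${\mathfrak u}_\alpha$ and ${\mathfrak u}_\beta$ meet $\V$ (this is where $\lambda_2\neq 0$ enters: it guarantees $\ind_B^{P_\beta}\lambda$ is non-trivial enough that the $\beta$-direction survives).

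For the remaining sub-case $\lambda = (np-1, 0)$ with $n\ge 1$: here $\lambda_2 = 0$, so $\ind_B^{P_\beta}\lambda = \lambda$ is one-dimensional and $M(\lambda) = \ind_B^{P_\alpha}\lambda = H^0(s_\alpha,\lambda)$ as a $B$-module (this collapse is the key simplification). Since $p\mid\lambda_1+1 = np$, the $A_1$-computation recorded in Table \ref{tab:A2} (the $s_\alpha$ row with $p\mid\lambda_1+1$) gives $\V_{B_1}(H^0(s_\alpha,\lambda)) = {\mathfrak u}_\alpha$, which is precisely the asserted answer. I would then just need to check that this case is genuinely disjoint from the $\lambda_2\neq 0$ case and that \eqref{eq:A2-dimension} indeed fails here (it does: $p\mid 2\lambda_1+\lambda_2+2 = 2np$), so the two branches of \eqref{eq:A2-length2-calculation} are exhaustive and mutually exclusive under the blanket hypothesis that neither (A) nor (B) holds.

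\textbf{Main obstacle.} I expect the genuinely hard part to be the reverse inclusion ${\mathfrak u}_\alpha\cup{\mathfrak u}_\beta \subseteq \V_{B_1}(M(\lambda))$ in the case $\lambda_2\neq 0$ --- i.e. showing the support is no \emph{smaller} than the claimed union. The parabolic lower bound of Proposition \ref{prop:support-lower-bound} only gives $G\cdot\V_{B_1}(H^0(w_{S(v)},\lambda)) = G\cdot\V_{B_1}(H^0(s_\alpha s_\beta s_\alpha,\lambda))$ inside $G\cdot\V_{B_1}(M(\lambda))$ after $G$-saturation, which a priori loses the fine $B$-orbit information that distinguishes ${\mathfrak u}_\alpha\cup{\mathfrak u}_\beta$ from, say, just ${\mathfrak u}_\alpha$; so one cannot merely quote the saturated bounds. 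Instead I anticipate needing to compute $\Ext^\bullet_{(P_\alpha)_1}$ or $\Ext^\bullet_{(P_\beta)_1}$ groups involving $M(\lambda)$ directly, or to exploit the short-exact-sequence/Frobenius-kernel structure of $\ind_B^{P_\beta}\lambda$ together with the higher sheaf cohomology input flagged in the introduction, in order to certify that specific rank-variety lines in the $\alpha$- and $\beta$-directions are non-projective. This is also where the precise divisibility hypotheses (failure of both (A) and (B)) must be used in an essential rather than bookkeeping way, since for regular or $p$-indivisible-dimension weights those same lines \emph{are} projective. Controlling this --- carefully, via the explicit $SL(3)$ Demazure module structure --- is the crux of the argument.
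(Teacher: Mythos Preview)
Your proposal has the architecture backwards and contains a genuine gap in the upper bound. You propose to obtain $\V\subseteq{\mathfrak u}_\alpha\cup{\mathfrak u}_\beta$ from the parabolic upper bound in Proposition~\ref{prop:support-lower-bound}, claiming that under failure of (A) and (B) ``a short case-check on the divisibility hypotheses forces both [of $\V_{B_1}(H^0(s_\alpha,\lambda))$, $\V_{B_1}(H^0(s_\beta,\lambda))$] to be proper.'' This is false. Take $\lambda_1\equiv 0$ and $\lambda_2\equiv -2\pmod p$ with $\lambda_2\neq 0$ (e.g.\ $p=3$, $\lambda=(0,1)$): then $p\mid\lambda_1+\lambda_2+2$ and $p\mid 2\lambda_1+\lambda_2+2$, so both (A) and (B) fail, yet $p\nmid\lambda_1+1$ and $p\nmid\lambda_2+1$, so by Table~\ref{tab:A2} both $\V_{B_1}(H^0(s_\alpha,\lambda))$ and $\V_{B_1}(H^0(s_\beta,\lambda))$ equal ${\mathfrak u}$. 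Proposition~\ref{prop:support-lower-bound} then yields only the trivial bound $G\cdot\V\subseteq G\cdot{\mathfrak u}$. These are precisely the ``type~(ii)'' weights of Lemma~\ref{lem:type-i-ii}, and they are the case where the real work lies.

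You have also inverted the difficulty. The inclusion ${\mathfrak u}_\alpha\cup{\mathfrak u}_\beta\subseteq\V$ (for $\lambda_2\neq 0$), which you flag as the main obstacle, is in fact short: decompose $M(\lambda)|_{L_\alpha}$ as $\bigoplus_{i=1}^{\lambda_2+1}\ind_{B\cap L_\alpha}^{L_\alpha}(\lambda_1+i)$, observe that two consecutive summands have dimensions differing by one so $X_\alpha\in\V$, then use $B$-stability to get $X_{\alpha+\beta}\in\V$ and $P_\alpha$-stability to get $X_\beta\in\V$. What is genuinely hard is the \emph{properness} $\V\neq{\mathfrak u}$ in the case $\lambda_2\neq 0$, $p\nmid\lambda_2+1$. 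The paper first classifies all $B$-stable irreducible closed subvarieties of ${\mathfrak u}$ (there are only five), so properness alone forces $\V={\mathfrak u}_\alpha\cup{\mathfrak u}_\beta$; then properness is established by a double induction (Lemmas~\ref{lem:type-i-ii}--\ref{lem:final-reduction}) that tensors with $L(1,0)$, $L(0,1)^{(1)}$, $L(1,0)^{(1)}$, uses filtrations to reduce to base cases $M(np,p-2)$, and needs to control $R^1F$ via the Grothendieck spectral sequence and Serre duality. Your sketch does not engage with this mechanism, and the saturated bounds you rely on cannot substitute for it in the type~(ii) regime. Your treatment of the (A)/(B) cases and of $\lambda_2=0$ is fine and matches the paper.
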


The rest of the section is devoted to proving Theorem \ref{thm:A2}. First, if either 
\eqref{eq:A2-regularity} or \eqref{eq:A2-dimension} holds, 
we conclude that $\V = {\mathfrak u}$ by Proposition \ref{prop:regular-case} or the rank variety 
description of $\V$, respectively. For the rest of the section we assume that neither \eqref{eq:A2-regularity} 
nor \eqref{eq:A2-dimension} holds and calculate $\V$ which will turn out to be a proper 
subvariety of ${\mathfrak u}$.

Our analysis uses the $B$-stability of support varieties in a crucial way, in
particular the action of positive root subgroups and certain one-parameter
groups in the maximal torus. The nilradical $\mathfrak{u}$ is spanned by root
spaces $\mathfrak{u} = k X_\alpha \oplus k X_\beta \oplus k X_{\alpha+\beta}$.
There is a one-parameter subgroup $k^* \subset T \subset B$ such that \[
t.X_\gamma = t^{\operatorname{ht}(\gamma)} X_\gamma, \] for all $t \in k^*$
and $\gamma \in \Phi$. This group is generated by the element usually denoted
by $H_\rho$, where $\rho$ is the half sum of positive roots ($\rho = \alpha +
\beta$ in this case).

As a preliminary step we classify the $(B,k^*)$-stable subvarieties of
$\mathfrak{u}$. Let $v = a X_\alpha + b X_\beta + c X_{\alpha+\beta}$ be an
arbitrary point of $X \subset \mathfrak{u}$ an irreducible $B$-stable
subvariety of $\mathfrak{u}$. Here, $\rk v$ denotes the rank of a matrix
representative for $v$. The claim is that $X$ is equal to one of the following
($B$-stable) subspaces: $\mathfrak{u}$, $\overline{B \cdot X_\alpha} = k
X_\alpha \oplus k X_{\alpha+\beta}$, $\overline{B \cdot X_\beta} = k X_\beta
\oplus k X_{\alpha+\beta}$, and $\overline{B \cdot X_{\alpha+\beta}} = k
X_{\alpha+\beta}$, or $\{0\}$. There are five mutually exclusive cases:
\begin{enumerate} 
\item Suppose that $a,b \ne 0$. Then $\rk v = 2$ and the
$B$-orbit through $v$ is dense in $\mathfrak{u}$. Thus $X = \mathfrak{u}$.
\item Suppose that $a \ne 0, b = 0$. Using the action of $k^*$, we see that
the element $v' = a X_\alpha$ is in the closure of $B \cdot v$. Hence, 
$X_\alpha \in X$ and so $\overline{B \cdot X_\alpha} \subset X$. 
\item Suppose
that $a = 0, b \ne 0$. Then as in the previous case we conclude that
$\overline{B \cdot X_\beta} \subset X$. 
\item Suppose that $a, b = 0$ and $c
\ne 0$. In this case $\overline{B \cdot X_{\alpha + \beta}} \subset X$. 
\item Suppose that $a,b,c = 0$. Then, $v = 0$ and $\{0\} \subset X$.
\end{enumerate} 
Therefore, every $B$-stable, irreducible subvariety $X
\subset {\mathfrak u}$ is a union of the five subspaces above, thus it must
equal one of them.

Now we treat a number of cases depending on $\lambda$ and $p$ to determine
which root vectors are in the support variety. By the analysis of the previous
paragraph, this suffices to determine the variety as a union of $B$-stable
subvarieties.

First, suppose $\lambda_2 = 0$. In this case, $\ind_B^{P_\beta} \lambda \cong \lambda$ as a $B$-module and so $M(\lambda_1, 0) \cong \ind_B^{P_\alpha} (\lambda_1, 0)$. Thus by the $\ell(w) = 1$ calculation in Table \ref{tab:A2},
\[ \V_{B_1}(M(\lambda_1,0)) = \left\{ \begin{array}{ll} 
{\mathfrak u}_\alpha, & \text{if}\,\, p \mid \lambda_1 + 1 \\[3pt]
{\mathfrak u}, & \text{if}\,\, p \nmid \lambda_1+1 .\\[3pt]
\end{array} \right.
\]
This proves the first part of \eqref{eq:A2-length2-calculation}. 
Note that if $p$, $\lambda$ are such that $p \nmid \lambda_1+1$ and $\lambda_2 = 0$,
then they satisfy both \eqref{eq:A2-regularity} and \eqref{eq:A2-dimension}.

Now suppose $\lambda_2 \ne 0$. $M(\lambda)$ is induced from $H^0(s_\beta, \lambda)$ as a 
$P_\alpha$-module and as an
$L_\alpha$-module we have: \[ M(\lambda)\lvert_{L_\alpha} \cong \bigoplus_{i=1}^{\lambda_2+1}
\ind^{L_\alpha}_{B \cap L_\alpha} (\lambda_1+i) ,\] where the right hand side
is a direct sum of irreducible $L_\alpha$-modules indexed by the integers
$\lambda_1+1, \cdots, \lambda_1 + \lambda_2 + 2$. The assumption that $\lambda_2
\ne 0$ implies that $M(\lambda)$ has at least two $L_\alpha$ summands whose dimensions
differ by $1$ and so it cannot be projective over $\langle X_\alpha \rangle$. Therefore, 
$X_\alpha \in \V$ and by the analysis above of the $B$-stable, conical subvarieties of ${\mathfrak u}$ 
we get $\overline{B \cdot X_\alpha}
\subset \V$. Thus $X_{\alpha+\beta} \in \overline{B \cdot X_\alpha} \subset
\V$. Using the fact that $\V$ is $P_\alpha$-stable we can conclude that
$X_\beta \in \overline{P_\alpha \cdot X_{\alpha + \beta}} \subset \V$. Hence,
independent of $p$ we have: 
\begin{equation*} 
\overline{B \cdot X_\alpha} \cup \overline{B
\cdot X_\beta} = {\mathfrak u}_\alpha \cup {\mathfrak u}_\beta \subseteq \V.
\end{equation*} 

Suppose that $p \mid(\lambda_2+1)$. In this case, $\lambda_2$ is a Steinberg weight for $L_\beta$
and we have $\V_{B_1}(H^0(s_\beta, \lambda )) = {\mathfrak u}_\beta$. By Theorem \ref{thm:fp-bend-generalization}, 
\[ \V_{(P_\alpha)_1}(M(\lambda)) \subseteq P_\alpha \cdot \V_{B_1}(H^0(s_\beta,
\lambda )) = P_\alpha \cdot {\mathfrak u}_\beta .\] Now,
observe that the right hand side is contained in the subvariety \[ R_1 :=
\left\{ v \in \mathfrak{g} \mid \rk v \le 1 \right\} .\] Since $\rk (X_\alpha
+ X_\beta) = 2$ we have that $(X_\alpha + X_\beta) \notin \V$ so $\V$ is a
proper subvariety of $\mathfrak{u}$. We conclude in this case that \[ \V =
{\mathfrak u}_\alpha \cup {\mathfrak u}_\beta .\]

\subsection{Properness of Supports} We continue the proof of Theorem \ref{thm:A2} with all assumptions from Subsection \ref{subsec:A2}; in particular, $w=s_\alpha s_\beta$. 
Now assuming that $\lambda_2 \ne 0$ and $p \nmid \lambda_2+1$, we reduce to 
two families of modules which also satisfy neither \eqref{eq:A2-regularity} nor \eqref{eq:A2-dimension}.
\begin{lem}
\label{lem:type-i-ii}
If $\lambda = (\lambda_1, \lambda_2)$ satisfy $\lambda_2 \ne 0$, $p \nmid \lambda_2+1$, and neither \eqref{eq:A2-regularity} nor \eqref{eq:A2-dimension}, then either
\begin{equation*}
\label{eq:type1}
\tag{i}
\lambda_1 \equiv -1 \pmod{p}, \quad \lambda_2 \equiv 0 \pmod{p}
\end{equation*}
or
\begin{equation*}
\label{eq:type2}
\tag{ii}
\lambda_1 \equiv 0 \pmod{p}, \quad \lambda_2 \equiv -2 \pmod{p}
\end{equation*}
\end{lem}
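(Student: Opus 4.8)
The plan is to unwind the logical negations of conditions \eqref{eq:A2-regularity} and \eqref{eq:A2-dimension} under the standing hypothesis $p \nmid \lambda_2 + 1$, and then to finish with a short two-case computation modulo $p$.

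First I would record what the hypothesis ``neither \eqref{eq:A2-regularity} nor \eqref{eq:A2-dimension} holds'' gives. Negating \eqref{eq:A2-dimension}, which is the conjunction $p \nmid \lambda_2 + 1$ and $p \nmid 2\lambda_1 + \lambda_2 + 2$, and discarding the first disjunct by hypothesis, forces $p \mid 2\lambda_1 + \lambda_2 + 2$. Negating \eqref{eq:A2-regularity} similarly, and again discarding the disjunct $p \mid \lambda_2 + 1$, forces $p \mid \lambda_1 + 1$ or $p \mid \lambda_1 + \lambda_2 + 2$. Thus the argument splits into two cases: (a) $p \mid \lambda_1 + 1$ together with $p \mid 2\lambda_1 + \lambda_2 + 2$; and (b) $p \mid \lambda_1 + \lambda_2 + 2$ together with $p \mid 2\lambda_1 + \lambda_2 + 2$.

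Then I would dispatch each case by elementary arithmetic in $\mathbb{Z}/p\mathbb{Z}$. In case (a), $\lambda_1 \equiv -1 \pmod p$ substituted into $2\lambda_1 + \lambda_2 + 2 \equiv 0 \pmod p$ yields $\lambda_2 \equiv 0 \pmod p$, which is \eqref{eq:type1}. In case (b), subtracting the two divisibility conditions gives $p \mid \lambda_1$, and then $p \mid \lambda_1 + \lambda_2 + 2$ gives $\lambda_2 \equiv -2 \pmod p$, which is \eqref{eq:type2}. As a sanity check I would observe that the resulting congruences are consistent with $p \nmid \lambda_2 + 1$ (they give $\lambda_2 + 1 \equiv 1$ and $\lambda_2 + 1 \equiv -1$ respectively, both nonzero since $p \ge 3$), and that cases (a) and (b) cannot overlap since $p \nmid 1$.

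Since the whole argument is a finite, entirely elementary manipulation of congruences, I do not expect a genuine obstacle. The only point requiring care is the faithful translation of the negations of the two conjunctions \eqref{eq:A2-regularity} and \eqref{eq:A2-dimension} into disjunctions, together with the elimination of the common disjunct $p \mid \lambda_2 + 1$ using the hypothesis of the lemma; once that bookkeeping is done correctly, the two cases close immediately.
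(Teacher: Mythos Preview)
Your proposal is correct and follows essentially the same approach as the paper: from $p\nmid\lambda_2+1$ the failure of \eqref{eq:A2-dimension} forces $p\mid 2\lambda_1+\lambda_2+2$, the failure of \eqref{eq:A2-regularity} then splits into the two cases $p\mid\lambda_1+1$ or $p\mid\lambda_1+\lambda_2+2$, and a one-line arithmetic in each case yields \eqref{eq:type1} and \eqref{eq:type2} respectively. Your added consistency check and disjointness remark are not in the paper but are harmless.
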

\begin{proof}
First, if $\lambda$, $p$ violate condition \eqref{eq:A2-dimension} and $p \nmid 
\lambda_2+1$, then $p \mid 2\lambda_1+\lambda_2+2$. Since the pair also violates \eqref
{eq:A2-regularity}, there are two possibilities.
\begin{enumerate}
\item Suppose that $p \mid \lambda_1 + 1$. Then $p \mid (2\lambda_1+\lambda_2+2) = 2
(\lambda_1+1) + \lambda_2$ if and only if $p \mid \lambda_2$. This is case \eqref{eq:type1} 
above.
\item Suppose that $p \mid \lambda_1 + \lambda_2 + 2$. Then $p \mid (2\lambda_1+
\lambda_2+2) = (\lambda_1+\lambda_2+2) + \lambda_1$ if and only if $p \mid \lambda_1$. 
Hence, $p \mid \lambda_2+2$. These two conditions are equivalent to case \eqref{eq:type2} 
above.
\end{enumerate}
\end{proof}

To complete the proof, we make two reductions. First, we prove that if the support variety 
for all modules of type \eqref{eq:type1} in Lemma~\ref{lem:type-i-ii} are proper then the support varieties for all 
modules of type \eqref{eq:type2} are proper, and vice versa. Next, we show by induction 
that it suffices to prove the properness of the support variety for modules of the form 
$M(np, p-2)$ for $n \ge 0$. These are modules of type \eqref{eq:type2}. Finally, we analyze 
the support of $M(np, p-2)$ using filtrations on the tensor product $M(np, mp-2) \otimes L
(0,1)^{(1)}$ where $L(0,1)^{(1)}$ denotes the $G$-module $L(0,1)$ (with highest weight $\mu 
= (0,1)$) twisted once by the Frobenius morphism.
\begin{lem}
\label{lem:typei-typeii}
The support $\V_{B_1}(M(\lambda))$ is a proper subvariety of ${\mathfrak u}$ (and hence equal to ${\mathfrak u}_\alpha 
\cup {\mathfrak u}_\beta$) for all $\lambda$ of type (i) if and only if the same holds for all $\lambda$ of type (ii).
\end{lem}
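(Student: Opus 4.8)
The idea is to exhibit an explicit isomorphism (or at least a support-variety-preserving relation) between the two families that swaps the roles of $\alpha$ and $\beta$ at the level of the Levi structure. Recall that a module of type \eqref{eq:type1} is $M(np-1, mp)$ and a module of type \eqref{eq:type2} is $M(n'p, m'p-2)$. First I would observe that both families restrict nicely to the Levi $L_\alpha$: we computed that $M(\lambda_1,\lambda_2)|_{L_\alpha} \cong \bigoplus_{i=1}^{\lambda_2+1} \ind^{L_\alpha}_{B\cap L_\alpha}(\lambda_1+i)$, so the properness of $\V_{B_1}(M(\lambda))$ is governed by the $L_\alpha$- and $L_\beta$-projectivity behavior of the relevant summands together with the rank-one obstruction $(X_\alpha + X_\beta)\notin \V$. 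The content of the lemma is that this rank-one obstruction is insensitive to the symmetry $\alpha \leftrightarrow \beta$, $\lambda_1 \leftrightarrow \lambda_2$.

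The cleanest route I can see is via twisting by a one-dimensional character or by a Frobenius-twisted module and using the outer involution of $SL(3)$. Concretely, the diagram automorphism $\tau$ of $SL(3)$ interchanges $\alpha$ and $\beta$ and carries $B$ to $B$, hence induces an equivalence of $\mathrm{mod}(B)$ that sends $H^0(s_\alpha s_\beta,\lambda)$ to $H^0(s_\beta s_\alpha, \tau\lambda)$ and acts on $\mathfrak u$ by swapping $X_\alpha \leftrightarrow X_\beta$, fixing $X_{\alpha+\beta}$ up to scalar. Under $\tau$, properness of the support is preserved and $\mathfrak u_\alpha \cup \mathfrak u_\beta$ is fixed. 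So the remaining task is to relate $M(\lambda) = H^0(s_\alpha s_\beta, \lambda)$ with $H^0(s_\beta s_\alpha, \mu)$ for suitable $\mu$ of the partner type; this is where one compares $\ind_B^{P_\alpha}\ind_B^{P_\beta}\lambda$ with $\ind_B^{P_\beta}\ind_B^{P_\alpha}\mu$. These are generally different $B$-modules, but I expect that for the arithmetically constrained weights of types \eqref{eq:type1} and \eqref{eq:type2} — where one of the two "inner" inductions is up to a twist a Steinberg-type module for the corresponding Levi — the two iterated inductions have the same restriction to each minimal Levi and the same rank-one obstruction, which by the classification of $(B,k^*)$-stable subvarieties of $\mathfrak u$ suffices to equate the support varieties.

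The key steps, in order, would be: (1) translate types \eqref{eq:type1} and \eqref{eq:type2} into the normal forms $M(np-1, mp)$ and $M(np, mp-2)$; (2) apply the diagram automorphism $\tau$ to reduce a type-(i) statement to a statement about $H^0(s_\beta s_\alpha, -)$ with a type-(ii)-flavored weight; (3) compare $H^0(s_\beta s_\alpha, \mu)$ with $H^0(s_\alpha s_\beta, \mu')$ by examining their $L_\alpha$- and $L_\beta$-restrictions and verifying that under the congruence conditions the relevant summand multiplicities, and in particular the presence or absence of a rank-$2$ element in the support, coincide; (4) invoke the classification of irreducible $B$-stable conical subvarieties of $\mathfrak u$ (established earlier in this subsection) to conclude that equal rank-one obstructions plus equal containment of $\mathfrak u_\alpha \cup \mathfrak u_\beta$ forces equal support varieties, so properness transfers in both directions.

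The main obstacle I anticipate is step (3): $\ind_B^{P_\alpha}\ind_B^{P_\beta}$ and $\ind_B^{P_\beta}\ind_B^{P_\alpha}$ do not commute, so I cannot simply quote a symmetry of Demazure modules. I will need to control the $L_\beta$-module structure of $M(np-1,mp)$ — not just its $L_\alpha$-structure — and show that, modulo the arithmetic hypotheses, the failure of $L_\beta$-projectivity (equivalently, whether $X_\beta$ forces extra elements into $\V$ beyond $\mathfrak u_\alpha\cup\mathfrak u_\beta$) mirrors exactly the failure of $L_\alpha$-projectivity for the partner module $M(n'p, m'p-2)$. If a clean module isomorphism is unavailable, the fallback is the more hands-on argument: show directly for each family that $(X_\alpha + X_\beta)\in\V$ if and only if a certain $p$-divisibility holds, and check that this $p$-divisibility condition is symmetric under the substitution defining the two types.
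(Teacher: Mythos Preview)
Your approach has a genuine gap. The diagram automorphism $\tau$ of $SL(3)$ sends $H^0(s_\alpha s_\beta,\lambda)$ to $H^0(s_\beta s_\alpha,\tau\lambda)$, so it changes the Weyl group element as well as the weight. If $\lambda=(np-1,mp)$ is of type \eqref{eq:type1} for $w=s_\alpha s_\beta$, then $\tau\lambda=(mp,np-1)$ is, for $w=s_\beta s_\alpha$, again of the analog of type \eqref{eq:type1} (i.e., the inner-induction coordinate is $\equiv -1$, the outer one is $\equiv 0$), \emph{not} of type \eqref{eq:type2}. Thus $\tau$ only interchanges the two instances of the lemma (one for each length-two $w$); it does not relate type \eqref{eq:type1} to type \eqref{eq:type2} within a fixed $w$. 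Your step (3) then asks you to compare $H^0(s_\beta s_\alpha,\mu)$ with $H^0(s_\alpha s_\beta,\mu')$, but these are genuinely different $B$-modules and you have no mechanism beyond ``check the rank-one obstruction directly'' --- which is precisely the statement the entire section is working to establish, and is not accessible from the Levi restrictions alone (the $L_\alpha$-decomposition you quote tells you $X_\alpha\in\V$, not whether $X_\alpha+X_\beta\in\V$).

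The paper's argument is entirely different and avoids any symmetry considerations. One starts from a module already known to have proper support, namely $M(np,mp-1)$ (the $p\mid \lambda_2+1$ case), tensors with the $G$-module $L(1,0)$, and uses the tensor identity to obtain a three-step filtration of $M(np,mp-1)\otimes L(1,0)$ with sections $M(np+1,mp-1)$, $M(np-1,mp)$, $M(np,mp-2)$. The tensor product has proper support (tensoring does not enlarge supports), and $M(np+1,mp-1)$ has proper support (again $p\mid\lambda_2+1$). The two short exact sequences coming from the filtration, together with the fact that in an exact sequence the support of any term lies in the union of the supports of the other two, then show that $M(np-1,mp)$ has proper support if and only if $M(np,mp-2)$ does. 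This is the translation-functor style argument you should be looking for; the diagram automorphism plays no role.
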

\begin{proof}
Consider the $B$-module $M(np, mp-1)$ for some $n \ge 0$, $m > 0$. This module has proper support by the argument given for the case $p \mid \lambda_2+1$. Let $L(1,0)$ denote the irreducible $G$-module with highest weight $(1,0)$ and consider the tensor product $M(np, mp-1) \otimes L(1,0)$. The $G$-module structure on $L(1,0)$ allows to use the tensor identity (\cite[I.4.8]{Jan}) to identify
\begin{align*}
M(np, mp-1) \otimes L(1,0) & = \left[ \ind_B^{P_\alpha} \ind_B^{P_\beta} (np, mp-1) \right] \otimes L(1,0) \\
& \cong \ind_B^{P_\alpha} \ind_B^{P_\beta} \left[ 
(np, mp-1) \otimes L(1,0)_{\vphantom{B}}^{\vphantom{P_\alpha}} \right] .
\end{align*}
Now, $L(1,0)$ has a filtration as a $B$-module as follows
\[ 
\begin{array}{ccc}
L(1,0) = \left[ 
\vcenter{
\def\objectstyle{\scriptstyle}
\def\labelstyle{\scriptstyle}
\xymatrix{
(1,0) \ar@{-}[d] \\
(-1,1) \ar@{-}[d] \\
(0,-1)
}} \right. &
\text{which induces} &
(np,mp-1) \otimes L(1,0) = \left[
\vcenter{
\def\objectstyle{\scriptstyle}
\def\labelstyle{\scriptstyle}
\xymatrix{
(np+1,mp-1) \ar@{-}[d] \\
(np-1,mp) \ar@{-}[d] \\
(np,mp-2)
}} \right. .
\end{array}
\]
Let $F(\cdot)$ denote the functor $\ind_{B}^{P_\alpha} \ind_B^{P_\beta} (\cdot)$. Since 
the weights in the filtration for \mbox{$(np,mp-1) \otimes L(1,0)$} are all dominant, Kempf's 
vanishing theorem implies that $R^1F(\cdot)$ vanishes on each of the subquotients 
(\cite[I.4.4]{Jan}). Thus there is an induced filtration:
\[ M(np,mp-1) \otimes L(1,0) = \left[
\vcenter{
\def\objectstyle{\scriptstyle}
\def\labelstyle{\scriptstyle}
\xymatrix{
M(np+1,mp-1) \ar@{-}[d] \\
M(np-1,mp) \ar@{-}[d] \\
M(np,mp-2)
}} \right. .\]
Let $N$ denote the quotient $( M(np,mp-1) \otimes L(1,0) ) / M(np,mp-2)$. We have an 
exact sequence
\[ 0 \to M(np, mp-2) \to M(np,mp-1) \otimes L(1,0) \to N \to 0 .\]
The support variety of the middle term $M(np,mp-1) \otimes L(1,0)$ is proper. Now, $N$ 
sits in an exact sequence:
\[ 0 \to M(np-1, mp) \to N \to M(np+1,mp-1) \to 0 .\]
The support variety of the last term in this sequence, $M(np+1,mp-1)$, is proper by the $p \mid \lambda_2+1$ case.

Thus if the support of $M(np, mp-2)$ is proper, then the same hold for $N$ 
and hence by the second sequence, the same holds for $M(np-1, mp)$. On the other hand, 
if the support of $M(np-1, mp)$ is proper the second sequence 
implies that the same holds for $N$ and thus, by the first sequence, the same holds for $M(np, mp-2)$ 
(cf. \cite[(2.2.7)]{NPV} for properties of support varieties and exact sequences). 
\end{proof}

\begin{lem}
\label{lem:reduce-to-base}
The support variety $\V_{B_1}(M(np, mp-2))$ is proper for all $n \ge 0$, $m > 0$ if $
\V_{B_1}(M(np, p-2))$ is proper for all $n \ge 0$.
\end{lem}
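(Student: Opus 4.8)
The plan is to reduce the general case $M(np, mp-2)$ to the base case $M(np, p-2)$ by induction on $m$, using the same tensor-product-with-a-small-$G$-module technique that proved Lemma~\ref{lem:typei-typeii}. The right tool is a Frobenius-twisted module: consider the $G$-module $L(0,1)^{(1)}$, which as a $B$-module has weight $p\mu = (0,p)$ at the top. More precisely, $L(0,1)^{(1)}$ has a $B$-filtration with subquotients the Frobenius twists of the weights appearing in $L(0,1)$; in type $A_2$ the module $L(0,1)$ has weights $(0,1)$, $(1,-1)$, $(-1,0)$, so the twist has a $B$-filtration with layers $(0,p)$, $(p,-p)$, $(-p,0)$ (top to bottom). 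Tensoring with $M(np, mp-2)$ and using the tensor identity (\cite[I.4.8]{Jan}) as before, we get $M(np,mp-2)\otimes L(0,1)^{(1)} \cong \ind_B^{P_\alpha}\ind_B^{P_\beta}\big[(np,mp-2)\otimes L(0,1)^{(1)}\big]$, and the $B$-filtration on the bracketed tensor product has layers $(np, (m+1)p-2)$, $(np+p, (m-1)p-2)$, $(np-p, mp-2)$.

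The key point is that all three of these weights are dominant (using $n\ge 0$, $m>0$; the middle one needs $n\ge 0$ and the bottom needs $n\ge 1$, with the $n=0$ edge cases handled separately since $(np-p,mp-2)=(-p,mp-2)$ is not dominant, but then the induction functor kills that layer by Kempf vanishing, or one truncates the filtration). So Kempf vanishing (\cite[I.4.4]{Jan}) gives $R^1F$ vanishing on each subquotient, where $F(\cdot) = \ind_B^{P_\alpha}\ind_B^{P_\beta}(\cdot)$, and we obtain an induced filtration of $M(np,mp-2)\otimes L(0,1)^{(1)}$ with layers $M(np,(m+1)p-2)$, $M(np+p,(m-1)p-2)$, $M(np-p,mp-2)$. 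Now the support variety of a Frobenius twist is trivial: $\V_{B_1}\big(L(0,1)^{(1)}\big) = \{0\}$, so $\V_{B_1}\big(M(np,mp-2)\otimes L(0,1)^{(1)}\big) = \V_{B_1}(M(np,mp-2))\cap\V_{B_1}(L(0,1)^{(1)}) = \{0\}$ by the tensor product property of support varieties (\cite[(2.2.7)]{NPV}), hence in particular proper. Running the standard two-short-exact-sequence argument as in Lemma~\ref{lem:typei-typeii}: the tensored module has proper support, one of its filtration layers is $M(np+p,(m-1)p-2)$ which has proper support by the inductive hypothesis on $m$ (it is of the form $M(n'p,(m-1)p-2)$), and therefore the remaining layer, which includes $M(np,mp-2)$ up to an extension by $M(np,(m+1)p-2)$, must also have proper support — provided we can also control $M(np,(m+1)p-2)$.

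This last point is the main obstacle, and it forces a slightly more careful induction. The cleanest fix is to induct on $m$ and, at stage $m$, prove properness of $\V_{B_1}(M(np,mp-2))$ for all $n\ge 0$ simultaneously, arranging the exact sequences so that the "new" weight $(np,(m+1)p-2)$ does not appear — i.e. truncate the induced filtration to the bottom two layers $M(np+p,(m-1)p-2)$ and $M(np-p,mp-2)$, or equivalently note that the submodule $M(np-p,mp-2)$ sits inside a module whose support is governed by the two layers above it. Concretely: the quotient $Q := \big(M(np,mp-2)\otimes L(0,1)^{(1)}\big)/M(np,(m+1)p-2)$ has proper support because it is an extension involving only $M(np+p,(m-1)p-2)$ (proper by induction) and is a quotient of the tensored module (whose support is $\{0\}$); then from $0\to M(np-p,mp-2)\to Q\to M(np+p,(m-1)p-2)\to 0$ — reading the filtration so that $M(np-p,mp-2)$ is the bottom layer — we conclude $\V_{B_1}(M(np-p,mp-2))$ is proper, which reindexing ($n\mapsto n+1$) gives properness of $\V_{B_1}(M(np,mp-2))$ for all $n\ge 1$; the $n=0$ case $M(0,mp-2)$ is handled directly since there $\ind_B^{P_\beta}(0,mp-2)$ has $L_\beta$-Steinberg-type behavior making its support manifestly proper (it lies in $R_1$, the rank $\le 1$ locus). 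The base of the induction is $m=1$, which is exactly the hypothesis $\V_{B_1}(M(np,p-2))$ proper for all $n\ge 0$. The subtlety to watch is the bookkeeping of which filtration layer is a sub and which is a quotient, and the non-dominant edge cases at $n=0$; once those are pinned down the argument is formal from \cite[(2.2.7)]{NPV}.
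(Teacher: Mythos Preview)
There is a genuine gap. Your key claim that $\V_{B_1}\big(L(0,1)^{(1)}\big) = \{0\}$ is false: a Frobenius twist restricted to $B_1$ is a \emph{trivial} module (a direct sum of copies of $k$), not a projective one, so its support variety is all of $\V_{B_1} = \mathfrak{u}$. Consequently the tensor product formula gives
\[
\V_{B_1}\big(M(np,mp-2)\otimes L(0,1)^{(1)}\big) = \V_{B_1}(M(np,mp-2)) \cap \mathfrak{u} = \V_{B_1}(M(np,mp-2)),
\]
which is exactly the variety you are trying to prove is proper. Your inductive step is therefore circular: you tensor the \emph{target} module and need its support to be proper in order to conclude anything about the layers. (There is also some confusion about which filtration layer is a submodule and which is a quotient: $(0,p)$ is the highest weight of $L(0,1)^{(1)}$, so $M(np,(m+1)p-2)$ is the top \emph{quotient}, not a submodule you can divide out.)

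The fix is to reorganize the induction so that you tensor a module already known to have proper support. Assume properness for all $M(n'p, kp-2)$ with $k\le m$ and all $n'\ge 0$; then $M(np,mp-2)\otimes L(0,1)^{(1)}$ has proper support (equal to that of $M(np,mp-2)$), and its bottom two layers $M((n-1)p,mp-2)$ and $M((n+1)p,(m-1)p-2)$ are also proper by the inductive hypothesis. The remaining top quotient $M(np,(m+1)p-2)$ is then forced to have proper support by the short-exact-sequence property of supports. This is precisely how the paper's proof proceeds. Your treatment of the $n=0$ case also needs correction: $(0,mp-2)$ is not an $L_\beta$-Steinberg weight; rather, since $\lambda_1=0$ one has $J_\lambda=\{\alpha\}$ and $s_\alpha s_\beta = w_{0,J_\lambda}$, so $M(0,mp-2)\cong H^0(\lambda)$ as a $B$-module and Theorem~\ref{thm:w-is-w0J} applies.
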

\begin{proof}
The results follow by induction on $m$. Suppose that $\V_{B_1}(M(np, kp-2))$ is proper 
for all $n \ge 0$ and all $0 \le k \le m$. We prove that $\V_{B_1}(M(np,(m+1)p-2))$ is 
proper. Consider the tensor product
$M(np, mp-2) \otimes L(0,1)^{(1)}$. As in Lemma 
\ref{lem:typei-typeii}, we use the tensor identity and a filtration on $L(0,1)^{(1)}$. 
We have
\[ 
\begin{array}{ccc}
L(0,1)^{(1)} = \left[ 
\vcenter{
\def\objectstyle{\scriptstyle}
\def\labelstyle{\scriptstyle}
\xymatrix{
(0,p) \ar@{-}[d] \\
(p,-p) \ar@{-}[d] \\
(-p,0)
}} \right. &
\Longrightarrow &
M(np,mp-2) \otimes L(0,1)^{(1)} = \left[
\vcenter{
\def\objectstyle{\scriptstyle}
\def\labelstyle{\scriptstyle}
\xymatrix{
M(np,(m+1)p-2) \ar@{-}[d] \\
M((n+1)p-1,(m-1)p-2) \ar@{-}[d] \\
M((n-1)p,mp-2)
}} \right.
\end{array} .
\]
Let $N$ be the submodule such that $(M(np,mp-2) \otimes L(0,1)^{(1)}) / N 
\cong M(np,(m+1)p-2)$. The filtration on $N$ has subquotients whose 
supports are proper by the induction hypothesis, hence $N$ has proper 
support. It follows that $M(np,(m+1)p-2)$ has proper support.
\end{proof}
Finally, we prove that modules of the form $M(np, p-2)$ have proper 
support. This will finish off the calculation for $l(w)=2$ when $\Phi=A_{2}$ with $p\geq 3$. 

\begin{lem}
\label{lem:final-reduction}
The support variety $\V_{B_1}(M(np, p-2))$ is proper, hence
\[ \V_{B_1}(M(np, p-2)) = {\mathfrak u}_\alpha \cup {\mathfrak u}_\beta .\]
\end{lem}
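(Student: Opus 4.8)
The plan is to prove properness of $\V_{B_1}(M(np,p-2))$ by showing that the rank-$2$ element $X_\alpha + X_\beta$ does \emph{not} lie in the support variety, since by the classification of $(B,k^*)$-stable subvarieties of $\mathfrak u$ established above, $\V$ is proper if and only if it omits this element (equivalently, $\V \subseteq R_1$). Because $X_\alpha + X_\beta$ is a regular nilpotent element of the Levi subalgebra $\mathfrak l_{\{\alpha,\beta\}} = \mathfrak g$ itself — in fact it is a regular nilpotent of $\mathfrak g = \mathfrak{sl}_3$ — the strategy is to relate $\V_{B_1}(M(np,p-2))$ to a $G_1$-support variety: it suffices to find a $G$-module $V$ with $M(np,p-2)$ appearing suitably in a filtration of $V$ (or of a tensor product involving $M(np,p-2)$) such that $\V_{G_1}(V)$ is a proper $G$-stable subvariety of $\mathcal N_1(\mathfrak g)$, i.e. does not contain the regular orbit, and then use naturality $\V_{B_1}(M) = \V_{B_1} \cap \V_{G_1}(V)$-type arguments together with the exact-sequence properties of support varieties.

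Concretely, I would exploit the tensor product $M(np,p-2) \otimes L(0,1)^{(1)}$, exactly as in Lemma~\ref{lem:reduce-to-base}, but now reading the filtration in the other direction. The $B$-filtration of $L(0,1)^{(1)}$ with weights $(0,p), (p,-p), (-p,0)$ induces (via Kempf vanishing, since one checks the relevant twisted weights are dominant) a $P_\alpha$- or $G$-filtration of the tensor product with layers $M(np,2p-2)$, $M((n+1)p-1,p-2)$ (or an analogous middle term, whose support is proper by the $p \mid \lambda_2+1$ case), and $M((n-1)p,p-2)$. Descending induction on $n$ would then reduce everything to a single base case — the module $M(0,p-2) = H^0(s_\alpha s_\beta,(0,p-2))$ — which by the $\lambda_1 = 0$ analysis and Table~\ref{tab:A2} (the $\ell(w)=1$ calculation applied after noting $\ind_B^{P_\alpha}(0,p-2)\cong(0,p-2)$, so $M(0,p-2)\cong \ind_B^{P_\beta}(0,p-2)$, a Steinberg-type situation for $L_\beta$) has support variety $\mathfrak u_\beta$, which is manifestly proper. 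The higher sheaf cohomology alluded to in the introduction enters here in verifying the $R^1F$-vanishing for the Frobenius-twisted filtration layers, where not all weights are dominant a priori and one must instead compute $H^1$ of certain line bundles on $\mathbb P^1$-bundles and check they contribute only through $G$-modules with small support.

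The main obstacle I expect is controlling the middle layer $M((n+1)p-1,\, ?)$ of the induced filtration and confirming that its support variety is proper: unlike in Lemma~\ref{lem:reduce-to-base}, the first coordinate $(n+1)p-1$ is $\equiv -1 \pmod p$, so this is a module of type \eqref{eq:type1} rather than type \eqref{eq:type2}, and one must invoke Lemma~\ref{lem:typei-typeii} to transfer properness between the two families without circularity. Organizing the induction so that the type-(i)/type-(ii) interchange and the induction on $m$ and $n$ all close simultaneously is the delicate bookkeeping step. The other subtlety is the $R^iF$ vanishing for $i>0$ on the Frobenius-twisted layers, which is precisely why the $A_2$ computation genuinely requires higher cohomology; here I would compute the relevant $R^1\ind_B^{P_\beta}$ directly on each weight line and feed the (small, $G$-module) contributions into the support-variety exact sequences via \cite[(2.2.7)]{NPV}. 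Once those two points are handled, the conclusion $\V_{B_1}(M(np,p-2)) = \mathfrak u_\alpha \cup \mathfrak u_\beta$ follows by combining properness with the already-established inclusion $\mathfrak u_\alpha \cup \mathfrak u_\beta \subseteq \V$.
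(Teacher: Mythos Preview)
Your overall strategy --- tensor $M(np,p-2)$ with a Frobenius-twisted fundamental module, analyze the induced filtration, handle the non-dominant layer through $R^1F$ and Serre duality, and induct on $n$ --- is exactly the paper's. Two concrete choices, however, make your argument fail as written.

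First, the twist. The paper tensors with $L(1,0)^{(1)}$, not $L(0,1)^{(1)}$. With $L(1,0)^{(1)}$ the layers of $(np,p-2)\otimes L(1,0)^{(1)}$ are $((n+1)p,p-2)$ on top, $((n-1)p,2p-2)$ in the middle, and the non-dominant $(np,-2)$ on the bottom; so the target $M((n+1)p,p-2)$ is the top quotient and ascending induction on $n$ goes through once the middle layer and the $R^1F$ of the bottom are shown proper. With your $L(0,1)^{(1)}$, the top layer is $M(np,2p-2)$ while $\V$ of the tensor product equals $\V(M(np,p-2))$ itself. To bound the latter you would need $M(np,2p-2)$ proper, but feeding $m=1$ into Lemma~\ref{lem:reduce-to-base} requires exactly $M(np,p-2)$ proper --- the induction is circular. (The paper does use $L(0,1)^{(1)}$, but only in the auxiliary step showing $M((n-1)p,2p-2)$ proper, starting from $M((n-1)p,p-2)$ which is already known by the inductive hypothesis.)

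Second, the base case. You argue that $\ind_B^{P_\alpha}(0,p-2)\cong(0,p-2)$ forces $M(0,p-2)\cong\ind_B^{P_\beta}(0,p-2)$. But $M(\lambda)=\ind_B^{P_\alpha}\bigl(\ind_B^{P_\beta}\lambda\bigr)$ with $\ind_B^{P_\beta}$ applied \emph{first}; that inner induction is $(p-1)$-dimensional and the outer $\ind_B^{P_\alpha}$ is not trivial on it. The correct base-case argument is that $J_{(0,p-2)}=\{\alpha\}$ and $s_\alpha s_\beta=w_{0,\{\alpha\}}$, so by Proposition~\ref{prop:w-is-w0J} one has $M(0,p-2)\cong H^0(G/B,\mathcal L(0,p-2))$; since $\Phi_{(0,p-2),p}=\{\alpha+\beta\}\neq\emptyset$, Theorem~\ref{thm:w-is-w0J} gives properness. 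Finally, your anticipated obstacle (a ``type~(i) middle layer'' requiring Lemma~\ref{lem:typei-typeii}) does not arise: the non-dominant bottom weight contributes only via $R^1F$, and the spectral sequence plus Serre duality identify this with $\ind_B^{P_\alpha}(np-1,0)$, an $\ell(w)=1$ module already handled by Table~\ref{tab:A2}.
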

\begin{proof}
We argue by induction on $n$. The base case is $M(0,p-2)$. This module has proper support by Corollary \ref{thm:w-is-w0J} since $\Phi_{\lambda, p} = 
\{ \alpha + \beta \}$. Assume that $M(kp,p-2)$ has proper support for 
all $0 \le k \le n$. We show that $M((n+1)p,p-2)$ has proper support.

As in the previous two lemmas, we consider a tensor product, in this case
$M(np,p-2) \otimes L(1,0)^{(1)}$. The filtration on $(np,p-2) \otimes L(1,0)^{(1)}$ now 
has socle the 1-dimensional $B$-module $(np,-2)$ which is not a dominant weight so we 
are forced to consider the higher derived functors $R^iF$, $i > 0$.

The $G$-module $L(1,0)^{(1)}$ has a $B$-filtration with sections of the form: 
\[ 
L(1,0)^{(1)} = \left[ 
\vcenter{
\def\objectstyle{\scriptstyle}
\def\labelstyle{\scriptstyle}
\xymatrix{
(p,0) \ar@{-}[d] \\
(-p,p) \ar@{-}[d] \\
(0,-p)
}} \right. .
\]
Tensoring with $(np, p-2)$ gives an exact sequence of $B$-modules:
\[ 0 \to (np, -2) \to L(1,0)^{(1)} \otimes (np, p-2) \to 
\left[ \vcenter{
\def\objectstyle{\scriptstyle}
\def\labelstyle{\scriptstyle}
\xymatrix{ (p,0) \ar@{-}[d] \\ (-p, p) }}\right. \otimes (np, p-2) \to 0 .\]
Applying the induction functor $F(\cdot)$ we have a long exact sequence in cohomology:
\begin{equation}
\label{eq:les1}
\xymatrix{ 
0 \ar[r] & F(np,-2) \ar[r] & L(1,0)^{(1)} \otimes F(np, p-2) \ar[r] & 
F\left(
\left[
\def\objectstyle{\scriptstyle}
\def\labelstyle{\scriptstyle}
\xy
{\ar@{-} (0,5)*{(p,0)}; (0,-5)*{(-p,p)}}; 
\endxy
\right. \otimes (np, p-2) 
\right) \\
\ar[r] & \ar[r] R^1F(np, -2) \ar[r] & 0 .& }
\end{equation}
The first term, $F((np,-2))$, vanishes since $(np,-2)$ is not $\beta$-dominant, so \eqref{eq:les1} is a 
short exact sequence. Also note that 
the second term has proper support by the induction hypothesis. Now, we claim that 
the module $R^1F((np,-2))$ has proper support. 

Recall that $F(\cdot) = \ind_B^{P_\alpha} \circ \ind_B^{P_\beta}( \cdot )$. Consider the 
spectral sequence: 
$$E_{2}^{i,j}=R^{i}\text{ind}_{B}^{P_{\alpha}}R^{j}\text{ind}_{B}^{P_{\beta}}(np,-2)\Rightarrow 
R^{i+j}F(np,-2).$$ 
Set $E_1 = R^1F((np,-2))$. The spectral sequence yields a five term exact sequence of the form:
\[
\xymatrix{
0 \ar[r] & R^1\ind_B^{P_\alpha} \left( \ind_B^{P_\beta}( (np, -2) ) \right) \ar[r] &
E_1  & \\
\ar[r] & \ind_B^{P_\alpha}\left( R^1\ind_B^{P_\beta}( (np, -2) ) \right) \ar[r] &
R^2\ind_B^{P_\alpha}\left( \ind_B^{P_\beta}( (np,-2) ) \right) \ar[r] & \cdots .
}
\]
Since $\ind_B^{P_\beta}( (np,-2) ) = 0$, the first and last term vanish so we have
$$E_{1} \cong \ind_B^{P_\alpha}\left( R^1\ind_B^{P_\beta}( (np, -2) ) \right).$$ 
By Serre Duality (\cite[Prop. 5.2(c)]{Jan}), $R^1\ind_B^{P_\beta}( (np, -2) ) \cong (np-1,0)$. Consequently, 
from the $\ell(w) = 1$ case we can conclude that $E_1 \cong \ind_B^{P_\alpha}( (np-1,0) )$ has proper support.

Now \eqref{eq:les1} implies that the module
\[
F\left(
\left[
\def\objectstyle{\scriptstyle}
\def\labelstyle{\scriptstyle}
\xy
{\ar@{-} (0,5)*{(p,0)}; (0,-5)*{(-p,p)}}; 
\endxy
\right. \otimes (np, p-2) 
\right)
\]
has proper support. We have an exact sequence:
\[
0 \to F((n-1)p,2p-2) \to F\left( \left[
\def\objectstyle{\scriptstyle}
\def\labelstyle{\scriptstyle}
\xy
{\ar@{-} (0,5)*{(p,0)}; (0,-5)*{(-p,p)}}; 
\endxy
\right. \otimes (np, p-2) \right)
\to F((n+1)p, p-2) \to 0 .\]
The middle term has proper support and we to show that the last term has proper support. Thus it suffices to show that $F( ((n-1)p,2p-2) )$ has proper support. This is the other base case in our double induction.

We argue as in Lemma \ref{lem:reduce-to-base} with $n$ replaced by $n-1$ and $m=1$. Now, the $B$-filtration on $((n-1)p,p-2) \otimes L(0,1)^{(1)}$
has a non-dominant weight in the middle layer $((n+1)p,-2)$. Let $N$ denote the quotient $N := (((n-1)p,p-2) \otimes L(0,1)^{(1)})/((n-2)p,p-2)$ so that $N$ has socle consisting of $((n+1)p, -2)$. We have an exact sequence
\begin{equation}
\label{eq:FN}
0 \to F( (n-2)p, p-2 ) \to F\left[ ((n-1)p,p-2) \otimes L(0,1)^{(1)} 
\right] \to F(N) \to 0
\end{equation}
with first and middle terms having proper support, thus $F(N)$ has proper support. Furthermore, $F(N)$ sits in a sequence
\[ 0 \to ((n+1)p, -2) \to N \to ((n-1)p, 2p-2) \to 0 \]
and applying $F(\cdot)$ we have
\[ 
\xymatrix{
0 \ar[r] & F((n+1)p, -2) \ar[r] & F(N) \ar[r] & F((n-1)p, 2p-2) \\
      \ar[r]   & R^1F((n+1)p, -2) \ar[r] & R^1F(N) \ar[r] & 0 .}
\]
The term $F((n+1)p, -2)$ vanishes and so does $R^1F(N)$ by extending 
the sequence \eqref{eq:FN}. As before, we identify $R^1F((n+1)p,-2) \cong 
\ind_B^{P_\alpha}((n+1)p-1,0)$ which has proper support. Thus the term 
$F((n-1)p, 2p-2)$ has proper support and the proof is concluded.
\end{proof}
\vskip 1cm 

\subsection{(Type $A_{2}$, $p=2$)} 
\label{subsec:A2-p2}

When $p=2$, ${\mathcal N}_{1}({\mathfrak u})={\mathfrak u}_{\alpha}\cup {\mathfrak u}_{\beta}$. One can 
apply results from Sections 4 and 5 to give explicit descriptions of the $B_{1}$-supports of 
$H^{0}(w,\lambda)$ when $l(w)\neq 2$. We summarize the results in Table \ref{tab:A2p=2}.

\begin{table}[!h]
	\centering
	\begin{tabular}{c|c|l} 
		$w$ &  $\V_{B_1}(H^0(w,\lambda ))$ & $\lambda$ \\[2pt]
		\hline 
		$e$ & ${\mathfrak u}_{\alpha}\cup {\mathfrak u}_{\beta}$ & \text{all} $\lambda$ \\[2pt] 
		$s_\alpha$ & ${\mathfrak u}_\alpha$ & 
		$p \mid \lambda_1 + 1$ \\[2pt]
		& ${\mathfrak u}_{\alpha}\cup {\mathfrak u}_{\beta}$ & $p \nmid \lambda_1 + 1$ \\[2pt]
		$s_\beta$ & ${\mathfrak u}_\beta$ & $p \mid \lambda_2 + 1$ \\[2pt]
		& ${\mathfrak u}_{\alpha}\cup {\mathfrak u}_{\beta}$ & $p \nmid \lambda_2 + 1$ \\[2pt]
		$s_\alpha s_\beta s_\alpha$ & $\V_{G_1}(H^0(\lambda)) \cap ({\mathfrak u}_{\alpha}\cup {\mathfrak u}_{\beta})$ & 
		all $\lambda$ \\[2pt]
	\end{tabular}
	\vspace{0.2cm}
	\caption{$B_1$-support varieties for $A_2$ when $\ell(w) \ne 2$, $p=2$.}
	\label{tab:A2p=2}
\end{table} 

In the $l(w)=2$ case it suffices to consider (by symmetry) $w=s_{\alpha}s_{\beta}$. We note that 
there are no $p$-regular weights and $\dim M(\lambda)$ is always divisible by 2. Moreover, 
we don't need to show properness because all $B_{1}$-support varieties are already contained in 
${\mathfrak u}_{\alpha}\cup {\mathfrak u}_{\beta}$. The following result summarizes the $l(w)=2$ case. 

\begin{thm} Let $w=s_{\alpha}s_{\beta}$. The $B_1$-support variety $\V = \V_{B_1}(M(\lambda))$ is given by: 
\begin{equation}
\label{eq:A2-length2-calculationp=3}
\V_{B_1}(M(\lambda)) = \left\{
\begin{array}{ll}
{\mathfrak u}_\alpha, & \text{if}\,\, \lambda = (2n- 1, 0) \quad (n \ge 1), \\[3pt]
{\mathfrak u}_\alpha \cup {\mathfrak u}_\beta, & \text{if}\, \, \lambda=(2n,0) \quad (n \geq 0),  \\[3pt]
{\mathfrak u}_\alpha \cup {\mathfrak u}_\beta, & \text{if}\,\, \lambda_2 \ne 0  .\\[3pt]
\end{array} \right. 
\end{equation}
\end{thm}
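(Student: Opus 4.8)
The plan is to reduce the $p=2$ case to the machinery already developed for $p \geq 3$, noting that the work is \emph{easier} here because every $B_1$-support variety is automatically contained in ${\mathcal N}_1({\mathfrak u}) = {\mathfrak u}_\alpha \cup {\mathfrak u}_\beta$, so there is no ``properness'' step to carry out. First I would dispose of the case $\lambda_2 = 0$: as in the $p \geq 3$ analysis, $\ind_B^{P_\beta}\lambda \cong \lambda$ as a $B$-module when $\lambda_2 = 0$, so $M(\lambda_1,0) \cong \ind_B^{P_\alpha}(\lambda_1,0) = H^0(s_\alpha,(\lambda_1,0))$, and the $\ell(w)=1$ computation in Table~\ref{tab:A2p=2} gives ${\mathfrak u}_\alpha$ when $2 \mid \lambda_1+1$ (i.e.\ $\lambda_1 = 2n-1$) and ${\mathfrak u}_\alpha \cup {\mathfrak u}_\beta$ when $2 \nmid \lambda_1+1$ (i.e.\ $\lambda_1 = 2n$). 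This yields the first two lines of \eqref{eq:A2-length2-calculationp=3}.

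For the remaining case $\lambda_2 \neq 0$, I would run the same $L_\alpha$-restriction argument used in Subsection~\ref{subsec:A2}. As an $L_\alpha$-module, $M(\lambda)|_{L_\alpha} \cong \bigoplus_{i=1}^{\lambda_2+1} \ind^{L_\alpha}_{B \cap L_\alpha}(\lambda_1 + i)$, which is a direct sum of irreducible $L_\alpha$-modules whose dimensions run through $\lambda_1+1, \dots, \lambda_1+\lambda_2+2$; since $\lambda_2 \neq 0$ there are at least two summands whose dimensions differ by $1$, so $M(\lambda)$ is not projective over $\langle X_\alpha \rangle$ and hence $X_\alpha \in \V$. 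The classification of $(B,k^*)$-stable subvarieties of ${\mathfrak u}$ from Subsection~\ref{subsec:A2} then gives $\overline{B \cdot X_\alpha} = {\mathfrak u}_\alpha \subseteq \V$, so $X_{\alpha+\beta} \in \V$, and using $P_\alpha$-stability of $\V$ we get $X_\beta \in \overline{P_\alpha \cdot X_{\alpha+\beta}} \subseteq \V$, whence ${\mathfrak u}_\alpha \cup {\mathfrak u}_\beta \subseteq \V$. Combined with the automatic containment $\V \subseteq {\mathcal N}_1({\mathfrak u}) = {\mathfrak u}_\alpha \cup {\mathfrak u}_\beta$, this forces $\V = {\mathfrak u}_\alpha \cup {\mathfrak u}_\beta$, giving the third line of \eqref{eq:A2-length2-calculationp=3}.

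I do not anticipate a genuine obstacle: the delicate part of the $p \geq 3$ argument was proving properness (Lemmas~\ref{lem:type-i-ii} through \ref{lem:final-reduction}), and that is exactly what evaporates when $p = 2$ since $\dim {\mathcal N}_1({\mathfrak u})$ drops. The only point requiring a little care is to confirm that the classification of $B$-stable irreducible conical subvarieties of ${\mathfrak u}$ and the root-space decomposition ${\mathfrak u} = kX_\alpha \oplus kX_\beta \oplus kX_{\alpha+\beta}$ from Subsection~\ref{subsec:A2} are valid in characteristic $2$ — they are, since that analysis only used the $B$- and $k^*$-actions on root vectors and the rank stratification, none of which is characteristic-sensitive here — and to check that the $\langle X_\alpha \rangle$-nonprojectivity criterion via differing summand dimensions still applies, which it does since $p = 2$ still divides none of the relevant dimension differences of size $1$.
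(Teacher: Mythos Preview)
Your proposal is correct and follows exactly the approach the paper has in mind: the paper states this theorem without a detailed proof, simply noting beforehand that for $p=2$ one has ${\mathcal N}_1({\mathfrak u}) = {\mathfrak u}_\alpha \cup {\mathfrak u}_\beta$, so the properness arguments of Lemmas~\ref{lem:type-i-ii}--\ref{lem:final-reduction} are unnecessary, and implicitly leaving the reader to rerun the $\lambda_2 = 0$ reduction and the $L_\alpha$-restriction / $P_\alpha$-stability argument from Subsection~\ref{subsec:A2}. You have filled in precisely those details.
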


\end{document}